\newcommand{\A}{\mathbb{A}}
\newcommand{\Q}{\mathbb{Q}}
\renewcommand{\P}{\mathbb{P}}
\newcommand{\ZZ}{\mathbb{Z}}
\newcommand{\p}{\mathfrak{p}}
\newcommand{\m}{\mathfrak{m}}
\renewcommand{\sp}{\textrm{Spec }}
\DeclareMathOperator{\Sing}{Sing}
\DeclareMathOperator{\codim}{codim}
\DeclareMathOperator{\depth}{depth}
\DeclareMathOperator{\newchar}{char}
\newtheorem{thm}{Theorem}[section]
\newtheorem*{theoA}{Theorem A}
\newtheorem*{theoB}{Theorem B}
\newtheorem*{qst}{Question}
\newtheorem{lm}[thm]{Lemma}
\newtheorem{prop}[thm]{Proposition}
\theoremstyle{definition}
\theoremstyle{definition}\newtheorem{defi}[thm]{Definition}
\theoremstyle{definition}\newtheorem{rmk}[thm]{Remark}
\newtheorem{setup}[thm]{Setup}
\title{Potential log canonical centers}
\author{Lorenzo Prelli}
\address{University of Washington, Department of Mathematics, Box 354350, Seattle, WA 98195-4350, USA}
\email{lorenzop@uw.edu}
\date{}                                           
\begin{document}

\begin{abstract}
	Given an ambient variety $X$ and a fixed subvariety $Z$ we give sufficient conditions for the existence of a boundary $\Delta$ such that $Z$ is a log canonical center for the pair $(X, \Delta)$. We also show that under some additional hypotheses $\Delta$ can be chosen such that $(X, \Delta)$ has log canonical singularities.
\end{abstract}

\maketitle

\tableofcontents

\section*{Introduction}

Log canonical centers are special subvarieties of a pair $(X,\Delta)$, and their behavior is related to the properties of log canonical singularities; they also provide a framework for inductive arguments in higher dimensional algebraic geometry.

Most of the results in the literature consider a pair $(X,\Delta)$ and draw conclusions on its lc centers. In this paper, we fix an ambient variety $X$ and a subvariety $Z$ and we try to answer the following:

\begin{qst}
Is there a divisor $\Delta$ such that $Z$ is a log canonical center for $(X,\Delta)$? What can be said about the pair $(X,\Delta)$?
\end{qst}

If such a divisor exists, we call $Z$ a \emph{potential} lc center for $X$.

As it turns out, quite a large class of subvarieties $Z$ are potential lc centers, at least with reasonable hypotheses on $X$. The first result is the following:
\begin{theoA}[Theorem \ref{ci}]
Let $X$ a $\Q$-Gorenstein normal variety and let $Y$ be any irreducible subvariety not entirely contained in $\Sing X$. Then there is a boundary $\Delta$ such that $Y$ is a log canonical center for the pair $(X,\Delta)$.
\end{theoA}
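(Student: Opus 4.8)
The plan is to reduce everything to a local computation at the generic point of $Y$ and then to assemble $\Delta$ out of an appropriate number of divisors passing through $Y$. Write $c = \codim(Y,X)$ and let $\eta$ be the generic point of $Y$. Because $Y \not\subseteq \Sing X$, the point $\eta$ lies in the smooth locus of $X$, so the local ring $\mathcal{O}_{X,\eta}$ is regular of dimension $c$, with maximal ideal $\m_\eta$ cutting out $Y$. All the discrepancy bookkeeping will happen here, where $X$ is smooth and $Y$ is, locally, a smooth subvariety of codimension $c$; the hypothesis on $\Sing X$ is used precisely to guarantee this regularity.

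Next I would produce $c$ effective Cartier divisors through $Y$ that form a simple normal crossing configuration at $\eta$. Fixing an ample class and invoking Serre's theorem, I would choose $A$ positive enough that $\I_Y \otimes \mathcal{O}_X(A)$ is globally generated. For general members $H_1,\dots,H_c \in |\I_Y \otimes \mathcal{O}_X(A)|$, global generation forces their local equations $f_1,\dots,f_c \in \m_\eta$ to span $\m_\eta/\m_\eta^2$, hence to be a regular system of parameters; in particular each has multiplicity $1$ along $Y$, and near $\eta$ the divisors meet transversally, with $H_1 \cap \cdots \cap H_c = Y$. I then set $\Delta = H_1 + \cdots + H_c$. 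This is a boundary (all coefficients equal to $1$), and since each $H_i$ is Cartier while $K_X$ is $\Q$-Cartier by the $\Q$-Gorenstein hypothesis, $K_X + \Delta$ is $\Q$-Cartier, so discrepancies are defined.

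It then remains to verify the two defining properties of a log canonical center. Let $\pi$ be the blow-up of $Y$ near $\eta$, with exceptional divisor $E$ whose center on $X$ is $Y$. Since $Y$ is smooth of codimension $c$ there, $K_{X'} = \pi^*K_X + (c-1)E$, while $\operatorname{mult}_Y \Delta = c$ gives $\pi^*\Delta = \Delta' + cE$, whence $a(E; X,\Delta) = (c-1) - c = -1$. Moreover, at $\eta$ the pair $(X,\Delta)$ is snc with all boundary coefficients equal to $1$, hence log canonical at the generic point of $Y$, and $Y$ is exactly the deepest stratum $H_1 \cap \cdots \cap H_c$ of this snc divisor. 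Thus $Y$ is a log canonical center of $(X,\Delta)$.

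The step I expect to be the main obstacle is the Bertini/snc verification: one must check that general members of $|\I_Y \otimes \mathcal{O}_X(A)|$ genuinely cut out $Y$ by a regular system of parameters at $\eta$ — so that the multiplicity along $Y$ is exactly $c$ and the pair is lc there — rather than meeting along something larger or with excess tangency. A secondary point, which I would flag but not resolve here, is that this construction only controls $(X,\Delta)$ near the generic point of $Y$; arranging the pair to be log canonical in a full neighborhood, or globally, is genuinely harder and is presumably where the extra hypotheses of the later results are needed.
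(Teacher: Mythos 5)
Your proof is correct, but it takes a genuinely different and more local route than the paper. Both arguments rest on the same numerical fact: if $\Delta$ is a sum of $c=\codim(Y,X)$ divisors, each containing $Y$ with multiplicity one, then the exceptional divisor over $Y$ has discrepancy $(c-1)-c=-1$. The paper, however, realizes this globally: it first shows (Lemmas \ref{yayci} and \ref{ciinideal}) that $Y$ is a component of a \emph{reduced complete intersection} $W=V(F_1,\ldots,F_r)$ in $X$, takes $\Delta=\sum D_i$ with $D_i=V(F_i)$, and computes discrepancies on the normalization of the blow-up of $X$ along $W$ (Lemmas \ref{cicomponents}--\ref{snc}), which requires tracking all components of $W$ and of the exceptional locus. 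You instead localize at the generic point $\eta$ of $Y$, where $\mathcal{O}_{X,\eta}$ is regular of dimension $c$, choose $c$ general members of a globally generated twist of $\I_Y$ whose local equations give a regular system of parameters at $\eta$, and blow up $Y$ itself over a neighborhood of $\eta$; this bypasses the paper's commutative-algebra input entirely and is cleaner for Theorem A in isolation. Two points you should make explicit, both routine: (i) the genericity claim that $c$ general sections span $\m_\eta/\m_\eta^2$ over the residue field $k(\eta)$ (Nakayama plus an open determinantal condition over the infinite ground field), and (ii) that the valuation defined on the open neighborhood extends to a prime divisor on a model proper over all of $X$ with center exactly $Y$ --- the paper invokes the same ``lc center is a generic property'' reduction. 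What the paper's heavier, global construction buys is reuse: in Section 4 the description of $\Delta$ as $r$ members of a single linear system cutting out $W$ is exactly what feeds the Bertini and Du Bois arguments that control $(X,\Delta)$ away from the generic point, which, as you correctly flag, a purely generic construction cannot see; the paper's Lemma \ref{yayci} is also of independent interest.
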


If we make stronger assumptions on the singularities of both the subvariety and the ambient variety, we can even classify the singularities of $(X,\Delta)$.

\begin{theoB}[Theorem \ref{lcpair}]
Let $X$ be a Gorenstein variety with lc singularities, and let $W$ be a reduced irreducible special complete intersection in $X$ (i.e. cut out by homogeneous forms of the same degree) with log canonical singularities, not contained in $\Sing X$. Then there exist a boundary $\Delta$ such that the pair $(X, \Delta)$ has log canonical singularities and $W$ is a log canonical center for $(X, \Delta)$.
\end{theoB}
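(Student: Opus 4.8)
The plan is to realize $W$ as the transverse intersection of $c = \codim_X W$ general hypersurfaces drawn from the linear system that cuts it out, and to take $\Delta$ to be their reduced sum. Since $W$ is a special complete intersection, I may write $W = V(f_1, \dots, f_c)$ with $f_1, \dots, f_c \in H^0(X, \L)$ sections of a \emph{single} line bundle $\L$ — this is exactly where the hypothesis that the cutting forms have the same degree enters. Let $V = \langle f_1, \dots, f_c\rangle \subseteq |\L|$ be the linear system they span; its base locus is precisely $W$, and it defines a rational map $\f \colon X \dashrightarrow \P^{c-1}$ which is a morphism on $X \setminus W$. I would set $\Delta = H_1 + \dots + H_c$, where $H_1, \dots, H_c$ are general members of $V$ (equivalently $H_i = \f^* L_i$ for general hyperplanes $L_i \subseteq \P^{c-1}$). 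Because $c$ general hyperplanes in $\P^{c-1}$ have empty common intersection, $H_1 \cap \dots \cap H_c = W$, and since the $H_i$ are distinct and reduced, $\Delta$ is a reduced boundary, $\Q$-Cartier as $X$ is Gorenstein.

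First I would check that $(X, \Delta)$ is log canonical away from $W$. On $X \setminus W$ the system $V$ is base point free, so each $H_i$ is a general member of a base point free linear system; adding them one at a time and applying a Bertini-type theorem for log canonical pairs keeps the pair log canonical, the point being that a general member avoids the log canonical centers produced by the earlier members and meets them transversally. Since $X$ is log canonical to begin with, this gives log canonicity of $(X,\Delta)$ on $X \setminus W$. Next, at the generic point of $W$ the hypothesis $W \not\subseteq \Sing X$ guarantees that both $X$ and $W$ are smooth there, and the general members $H_1, \dots, H_c$ meet transversally along $W$. Thus $(X, \Delta)$ is simple normal crossing near the generic point of $W$, with $W = \bigcap_i H_i$ an lc center of discrepancy $-1$; equivalently, blowing up $W$ yields an exceptional divisor $E$ with center $W$ whose log discrepancy is $0$. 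This confirms that $W$ is a log canonical center as soon as the pair is known to be log canonical globally.

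The hard part will be proving that $(X, \Delta)$ is log canonical in a neighborhood of \emph{all} of $W$, where the transversality used above can fail: over $\Sing W$ and over $W \cap \Sing X$ the $H_i$ need not meet in simple normal crossings, and a naive multiplicity estimate does not suffice to conclude lc. Here I would invoke that $W$ itself has log canonical singularities, via inversion of adjunction. The technical core is to establish an adjunction formula for the special complete intersection $W \subseteq X$, namely $(K_X + \Delta)|_W \sim_\Q K_W$ with the boundary contributing no extra terms; this uses the Gorenstein hypothesis (so that $K_X$ is Cartier and the different is controlled) together with the fact that all the $H_i$ come from the single system $V$ cutting out $W$. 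Granting this, inversion of adjunction propagates log canonicity of $(W,0)$ to log canonicity of $(X, H_1 + \dots + H_c)$ along $W$ — either in one step through inversion of adjunction for complete intersections, or iteratively by restricting to $H_1$, then $H_1 \cap H_2$, and so on down to $W$, applying the divisorial inversion of adjunction at each stage while keeping the intermediate sections log canonical by the Bertini argument. Combining this with the log canonicity on $X \setminus W$ shows $(X,\Delta)$ is log canonical everywhere, and together with the generic computation identifying $W$ as an lc center this proves the theorem. The delicate point, on which everything hinges, is precisely the verification of the adjunction formula and the applicability of inversion of adjunction for the iterated restriction, where the special structure of $W$ and the Gorenstein condition on $X$ are indispensable.
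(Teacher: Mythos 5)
Your proposal is a genuinely different route from the paper's, and in outline it is viable, but it defers its central step. The paper never touches inversion of adjunction: after choosing general generators $G_1,\ldots,G_r$ of $I_W$ from the linear system spanned by the $F_i$ (Lemma \ref{genericgens}, which is essentially your ``general members $H_i$'' step, plus the crucial extra content that every partial intersection $D_S$ is reduced and \emph{normal}), it runs entirely through Du Bois theory: $W$ lc implies $W$ Du Bois (Koll\'ar--Kov\'acs), the deformation theorem of Kov\'acs--Schwede climbs up the chain of partial intersections to make every $D_S$ Du Bois, the excision lemma assembles these into the statement that $(X,\bigcup D_i)$ is a Du Bois pair, and Graf--Kov\'acs converts that back into log canonicity of $(X,\Delta)$ since $K_X+\Delta$ is Cartier. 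Your plan replaces the last three steps by Bertini for lc pairs away from $W$ together with iterated adjunction and Kawakita-type inversion of adjunction along the chain $X\supset H_1\supset H_1\cap H_2\supset\cdots\supset W$. That can be made to work, but note what it costs: (i) you still need exactly the paper's Lemma \ref{genericgens}, since each intermediate $D_s$ must be shown normal (not just reduced) for the adjunction $(K_{D_s}+D_{s+1})|_{D_{s+1}}=K_{D_{s+1}}$ to hold with trivial different and for inversion of adjunction to apply to $D_{s+1}$ itself rather than its normalization; (ii) the ``adjunction formula with no extra terms'' is not automatic --- it holds here because each $D_{s+1}$ is a \emph{Cartier} divisor on the normal variety $D_s$ with $K_{D_s}$ Cartier by induction from the Gorenstein hypothesis, and this is precisely the computation you flag but do not carry out; and (iii) inversion of adjunction for log canonicity in this generality is Kawakita's theorem, an input at least as heavy as the Du Bois package. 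So the comparison is: the paper's route buys freedom from differents and from inversion of adjunction, and yields the stronger byproduct that $(X,\bigcup D_i)$ is a Du Bois pair with every $D_S$ Du Bois; your route, once the deferred adjunction bookkeeping is actually done, stays within classical MMP language. As written, the proposal is a correct strategy with its hardest verification explicitly ``granted'' rather than proved.
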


This paper is structured as follows: the first section is devoted to the basic properties of lc centers. The second states and proves the first theorem. The third section is a brief introduction to the theory of Du Bois singularities, as they are used extensively in the fourth section to prove the second theorem.

\subsection*{Acknowledgements}
The author would like to thank his advisor S\'andor Kov\'acs for suggesting the problem and for his supervision. The author would also like to thank Alberto Chiecchio and Siddharth Mathur for numerous useful conversations.

\section{Background and notations}

\begin{defi}\label{lcdefi}
	Let $X$ be a normal variety over an algebraically closed field $k$ of characteristic zero, and let $\Delta$ be an effective Weil $\mathbb{R}$-divisor such that $K_X+\Delta$ is $\mathbb{R}$-Cartier. Assume furthermore that $\Delta$ is a subboundary, i. e. all of its coefficients are at most $1$. A subvariety $Z\subseteq X$ is called a \emph{log canonical center} for the pair $(X,\Delta)$ if:
	\begin{enumerate}
		\item The pair $(X,\Delta)$ has log canonical singularities at the generic point of every component of $Z$
		\item There is a birational morphism $f:Y\to X$ and a prime divisor $E$ in $Y$ such the discrepancy of $E$ with respect to $\Delta$ is $-1$ and $f(E) = Z$
	\end{enumerate}
\end{defi}

	If the pair $(X, \Delta)$ has log canonical singularities, more of the geometry of its lc centers is known.
	\begin{thm}[Theorem 5.14, \cite{kollar2013sing}]
	If $(X,\Delta)$ is a lc pair, then
	\begin{enumerate}
		\item Any intersection of lc centers is a union of lc centers
		\item Every union of lc centers is seminormal
		\item Every lc center that is minimal w. r. t. inclusion is normal
	\end{enumerate}
	\end{thm}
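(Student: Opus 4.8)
The unifying strategy is to pass to a \emph{dlt modification} $f\colon (Y,\Delta_Y)\to(X,\Delta)$, i.e.\ a crepant birational model on which $(Y,\Delta_Y)$ is divisorially log terminal and $K_Y+\Delta_Y=f^*(K_X+\Delta)$. On such a model the lc centers of $(X,\Delta)$ are exactly the images under $f$ of the \emph{strata} of $(Y,\Delta_Y)$, the irreducible components of the intersections $\bigcap_{i\in I}E_i$ of the components $E_i$ of $\lfloor\Delta_Y\rfloor$. These strata form a combinatorially transparent system, behaving like the strata of a simple normal crossing divisor, so the whole theorem becomes a question of how this structure descends along $f$. I would therefore first record the correspondence between strata upstairs and lc centers downstairs, together with the fact that every lc center is dominated by at least one stratum.

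For statement (1), let $Z_1,Z_2$ be lc centers, and let $\tilde Z_1,\tilde Z_2\subseteq Y$ be the unions of the strata mapping onto them. The preimage $f^{-1}(Z_1\cap Z_2)$ contains $\tilde Z_1\cap\tilde Z_2$, which is again a union of strata, hence a union of images of lc centers. The only gap is that the set-theoretic intersection $Z_1\cap Z_2$ downstairs could be strictly larger than the image of $\tilde Z_1\cap\tilde Z_2$. To close it I would invoke Kollár's connectedness theorem: the non-klt locus of a crepant model has connected fibers over the base, which forces the image of $\tilde Z_1\cap\tilde Z_2$ to account for all of $Z_1\cap Z_2$. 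This identifies $Z_1\cap Z_2$ as a union of lc centers.

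Statement (2) I would deduce from the Du Bois property of lc pairs developed later in the paper: the union of lc centers, with its reduced structure, is Du Bois, and Du Bois singularities are seminormal. Concretely, the reduced preimage in $Y$ is a reduced subscheme of simple normal crossing type, hence Du Bois, and this property is inherited by its image via the Kollár--Kovács machinery for the relevant pushforward and vanishing statements; seminormality then follows formally.

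Statement (3) is where the real work lies, and I expect it to be the main obstacle. Since a minimal lc center $W$ is already seminormal by (2), it suffices to show that its normalization $\nu\colon W^\nu\to W$ is an isomorphism; because a finite birational bijection onto a seminormal reduced scheme is an isomorphism, the task reduces to proving that $\nu$ is bijective, i.e.\ that $W$ is unibranch. Here minimality is essential: no proper stratum of $Y$ dominates $W$, so the strata mapping onto $W$ are all minimal with that property, and the connectedness theorem applied fiberwise over $W$ shows these strata meet each fiber in a connected set, which translates into unibranch-ness of $W$. The natural packaging of this argument is Kawamata's subadjunction, which simultaneously produces a boundary $\Delta_W$ on $W$ with $(K_X+\Delta)|_W\sim_{\Q}K_W+\Delta_W$ and $(W,\Delta_W)$ klt, giving normality directly. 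The delicate, least formal step is verifying that the restricted pair is genuinely klt and that the different $\Delta_W$ is a boundary — precisely where minimality of $W$ and the Hodge-theoretic input behind the connectedness theorem are used.
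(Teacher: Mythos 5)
This statement is not proved in the paper at all: it is quoted as background directly from \cite[Theorem 5.14]{kollar2013sing} and used as a black box, so there is no in-paper argument to compare yours against. Judged on its own terms, your sketch follows the standard route of the cited reference (dlt modification, lc centers as images of strata of $\lfloor\Delta_Y\rfloor$, connectedness, subadjunction), and part (2) via the Du Bois property is a legitimate alternative to the torsion-freeness argument there, consistent with the tools this paper develops in Section 3.

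The genuine gap is in your treatment of (1), and it propagates to (3). ``Invoke Koll\'ar's connectedness theorem'' does not close the hole you correctly identify: the connectedness theorem controls the fibers of the \emph{entire} non-klt locus over the base, whereas what you need is, for a given point $x\in Z_1\cap Z_2$, an lc center through $x$ contained in $Z_1\cap Z_2$. The fiber of the non-klt locus over $x$ will in general contain strata having nothing to do with $Z_1$ or $Z_2$, so its connectedness says nothing about $\tilde Z_1\cap\tilde Z_2$ until you first isolate $Z_1$ and $Z_2$ by a tie-breaking perturbation of the boundary (or use the finer $\P^1$-linking machinery of \cite[Ch.\ 4]{kollar2013sing}); that isolation step is the actual content and is missing. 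Similarly in (3), the reduction ``seminormal and unibranch implies normal'' is fine in characteristic zero, but the claim that connectedness of the strata over $W$ yields unibranchness needs the same isolation, and Kawamata's subadjunction in the form you invoke (a klt pair on $W$ itself rather than on its normalization) essentially presupposes the normality you are trying to prove; the self-contained argument in the reference goes through the sources-and-springs analysis instead.
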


	In this paper, we will make use of the following
\begin{defi}
	Let $X$ be a variety in $\P^n$, and let $Y$ be a subvariety of $X$ whose codimension in $X$ is $r$. Then we say that $Y$ is a \emph{complete intersection} in $X$ if there are $r$ homogeneous forms $F_1,\ldots,F_r$ such that $Y$ is the subscheme theoretic of $X$ defined by the ideal $(F_1,\ldots,F_r)$. If $d_i = \deg F_i$, then we say that $Y$ is of type $(d_1,\ldots,d_r)$ in $X$.

	If $d_1 = \ldots = d_r$ we say that $Y$ is a \emph{special complete intersection} in $X$.
\end{defi}

	We will also need some well known results from commutative algebra.
\begin{defi}
	Let $A$ be a Notherian ring. We say that $A$ is $R_n$ if  for every prime $\p\subset A$ of height $\leq n$ the local ring $A_{\p}$ is regular.
	We say that $A$ is $S_n$ if  for every prime $\p\subset A$ of height $\leq n$ we have $\depth A_{\p}\geq n$.
\end{defi}
\begin{prop}[Serre's criterion, \cite{eisenbud1995commutative}]
	A ring A is normal iff it's $R_1$ and $S_2$. A ring $A$ is reduced iff $R_0$ and $S_1$.
\end{prop}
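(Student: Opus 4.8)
The plan is to treat the two equivalences separately and to deduce the normality statement from the reducedness one, passing in each case through the behavior of associated primes. For the reducedness criterion I would first record the operational content of the two conditions: $S_1$ asserts $\depth A_\p \geq 1$ for every prime of height $\geq 1$, which is exactly the condition that no prime of positive height is associated to $A$, i.e. $\operatorname{Ass}(A) = \operatorname{Min}(A)$; while $R_0$ says that $A_\p$ is a field for every minimal $\p$. For the forward implication, a reduced ring has no embedded primes and each $A_\p$ with $\p$ minimal is a reduced Artinian local ring, hence a field, giving $R_0$ and $S_1$. For the converse I would invoke the canonical injection $A \hookrightarrow \prod_{\p \in \operatorname{Ass}(A)} A_\p$: under $S_1$ the index set is $\operatorname{Min}(A)$, and under $R_0$ each factor is a field, so $A$ embeds into a product of fields and is therefore reduced.

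For the normality criterion, in the forward direction normal implies reduced, hence $R_0$ and $S_1$, and every localization of a normal ring is a normal local domain. A normal local domain of dimension $\leq 1$ is a field or a Noetherian integrally closed one-dimensional local domain, i.e. a DVR, hence regular; this yields $R_1$. The real content is $S_2$, where I must show that a normal local domain $(R,\m)$ with $\dim R \geq 2$ has $\depth R \geq 2$. Being a domain, $\depth R \geq 1$; if $\depth R = 1$ then $\m \in \operatorname{Ass}(R/xR)$ for a nonzerodivisor $x$, so there is $y \notin xR$ with $\m y \subseteq xR$. Setting $t = y/x$ in the fraction field gives $t \notin R$ but $t\m \subseteq R$. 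Then $t\m$ is an ideal: if $t\m \subseteq \m$ the determinant trick makes $t$ integral over $R$, contradicting normality; if $t\m = R$ then $\m$ is principal and $\dim R = 1$, again a contradiction. Hence $\depth R \geq 2$, establishing $S_2$.

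For the converse, $R_1$ and $S_2$ give $R_0$ and $S_1$, so by the first part $A$ is reduced and embeds into its total ring of fractions $K$, a finite product of fields. The crux is the identity $A = \bigcap_{\operatorname{ht}\p = 1} A_\p$ (intersection taken inside $K$) valid for reduced $S_2$ rings. To prove it, I would take $b \in K$ lying in the intersection, write $b = f/s$ with $s$ a nonzerodivisor, and consider the denominator ideal $I = (A :_A b)$, which contains $s$. Since $A/I$ embeds into $A/sA$, and since $s$ is a nonzerodivisor, Krull's principal ideal theorem forces the minimal primes of $(s)$ to have height $1$ while $S_2$ forbids embedded primes in $A/sA$; hence every associated prime of $A/I$ has height $1$. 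If $I \neq A$, choose such a $\p \in \operatorname{Ass}(A/I)$, so $I \subseteq \p$; but $b \in A_\p$ forces $I \not\subseteq \p$, a contradiction, so $I = A$ and $b \in A$. Then for any $b \in K$ integral over $A$, by $R_1$ each height-one $A_\p$ is an integrally closed DVR, so $b \in A_\p$ for all such $\p$, whence $b \in A$ by the identity; thus $A$ is integrally closed in $K$. Finally, integral closedness forces the orthogonal idempotents of $K = \prod K_i$ to lie in $A$, so $A = \prod A_i$ splits as a product of normal domains, i.e. $A$ is normal.

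I anticipate the main obstacle to be the crux identity $A = \bigcap_{\operatorname{ht}\p = 1} A_\p$ under $S_2$, together with its companion the depth-$\geq 2$ dichotomy in the forward direction: both reduce the global statement to a codimension-one computation, and getting the associated-prime bookkeeping exactly right---in particular that $S_2$ is precisely what prevents embedded primes from appearing in $A/sA$---is where the substance of the argument lies.
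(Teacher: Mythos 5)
Your proof is correct, but note that the paper offers no proof of this proposition at all: it is stated as a black box with a citation to Eisenbud, so there is nothing internal to compare against. What you have written is essentially the standard textbook argument (Eisenbud, Thm.~11.5; Matsumura, Thm.~23.8): reducing $R_0+S_1$ to the statement $\operatorname{Ass}(A)=\operatorname{Min}(A)$ plus the embedding $A\hookrightarrow\prod_{\p\in\operatorname{Ass}(A)}A_\p$, handling the forward $S_2$ direction via the $t=y/x$ determinant-trick dichotomy, and handling the converse via the identity $A=\bigcap_{\operatorname{ht}\p=1}A_\p$ for reduced $S_2$ rings together with integral closedness of the DVRs $A_\p$ and the idempotent-splitting of the total quotient ring. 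All of these steps check out, including the key bookkeeping that $S_2$ forbids embedded primes of $A/sA$ so that $\operatorname{Ass}(A/I)$ consists of height-one primes. One caveat worth flagging: you are (correctly) using the standard Serre condition $\depth A_\p\geq\min(n,\operatorname{ht}\p)$, whereas the paper's stated definition of $S_n$ (``for every prime of height $\leq n$ we have $\depth A_\p\geq n$'') is garbled --- taken literally it is never satisfied at a minimal prime for $n\geq 1$ --- so your reading of $S_1$ as ``no embedded primes'' silently corrects the paper's definition rather than following it.
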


	Finally, we will use the following notation: if $A$ is a graded ring and $I$ a homogeneous ideal, $V(I)$ will denote the closed subscheme of $\text{Proj} A$ cut out by the ideal $I$.

\section{Log canonical centers of $\Q$-Gorenstein varieties}
	The main result of this section is:
\begin{thm} \label{ci}
	Let $X$ a $\Q$-Gorenstein normal variety and let $Z\subseteq X$ be any irreducible subvariety not entirely contained in $\Sing X$. Then there is a boundary $\Delta$ such that $Y$ is a log canonical center for the pair $(X,\Delta)$.
\end{thm}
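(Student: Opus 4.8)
The plan is to reduce the problem to a local computation near a smooth point of $Z$ and then produce the boundary $\Delta$ by taking a suitable linear combination of hyperplane sections (or, more intrinsically, of divisors in a very ample linear system) that vanish along $Z$ to the right order. Since $Z$ is not contained in $\Sing X$, I may work in the smooth locus $U = X \setminus \Sing X$, where $Z \cap U$ is a nonempty open subvariety of $Z$; because being a log canonical center is a condition that only needs to be checked at the generic point of $Z$ together with the existence of one divisor $E$ over $Z$, it suffices to construct $\Delta$ whose behavior is controlled over $U$. On $U$ the variety is smooth, so $K_U$ is Cartier and discrepancies can be computed by an explicit resolution, which makes the problem tractable.

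First I would arrange that $Z$ has pure codimension, say $c$, in $X$ by restricting to $U$ and, if necessary, passing to a generic point; then I would choose an effective Cartier divisor $D$ on $X$ (for instance a general member of a very ample system, rescaled by the Gorenstein index so that things are $\Q$-Cartier) that contains $Z$ and is smooth away from $Z$ near the generic point of $Z$. The key construction is to build $\Delta$ as a combination of $c$ such general divisors $D_1, \dots, D_c$ whose scheme-theoretic intersection is $Z$ generically, arranged so that their common vanishing cuts out $Z$ transversally at a general smooth point of $Z$ lying in $U$. Concretely, at such a point I can pick local coordinates $x_1, \dots, x_n$ on the smooth variety $U$ in which $Z$ is $\{x_1 = \dots = x_c = 0\}$, and take $\Delta = \sum_{i=1}^{c} D_i$ where each $D_i$ contributes the appropriate coordinate hyperplane.

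Next I would verify the two conditions of Definition \ref{lcdefi} by an explicit blow-up. Blowing up $X$ along $Z$ (or along the smooth locus of $Z$ inside $U$) produces an exceptional divisor $E$ with discrepancy $c - 1$ relative to $K_X$; pulling back $\Delta$, the multiplicity of $\Delta$ along $E$ is exactly $c$ when the $D_i$ meet transversally along $Z$, so the discrepancy of $E$ with respect to $(X, \Delta)$ becomes $(c-1) - c = -1$, giving condition (2) with $f(E) = Z$. For condition (1), I need the pair to be log canonical at the generic point of $Z$: after the same blow-up the total transform has coefficient $1$ along $E$ and the strict transforms of the $D_i$ are generically disjoint from $E$ and from each other near the generic point of $Z$, so no discrepancy drops below $-1$ there, which is precisely log canonicity at the generic point of $Z$.

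The main obstacle will be ensuring the transversality and the exact multiplicity computation globally enough that the divisor $E$ over $Z$ genuinely has discrepancy $-1$ and not less, while simultaneously keeping $\Delta$ effective with coefficients at most $1$ (the subboundary condition) and $K_X + \Delta$ genuinely $\Q$-Cartier on all of $X$. The $\Q$-Gorenstein hypothesis is what guarantees $K_X$ is $\Q$-Cartier so that adding a Cartier (or $\Q$-Cartier) $\Delta$ keeps $K_X + \Delta$ $\Q$-Cartier; the delicate point is choosing the $D_i$ general enough that their intersection is reduced and of the right dimension along $Z$ — this is where I expect to invoke a Bertini-type argument on the smooth locus $U$ and to be careful that generality can be achieved even when $Z$ is not itself smooth, by only requiring good behavior at a general point of $Z$ in $U$. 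Once the local discrepancy $-1$ is pinned down at that general point, the existence of the required divisor $E$ and log canonicity at the generic point follow, completing the construction.
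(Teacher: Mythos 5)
Your proposal follows essentially the same route as the paper: realize $Z$ generically as a transverse intersection of $c = \codim Z$ divisors $D_i$, set $\Delta = \sum D_i$, blow up, and observe that the exceptional divisor over $Z$ has discrepancy $(c-1)-c=-1$; the paper makes the global construction precise by producing a regular sequence in $I_Z$ via prime avoidance and Bertini (Lemmas \ref{yayci} and \ref{ciinideal}) and by normalizing the blow-up of the resulting complete intersection. One small correction: the strict transforms $\tilde{D_i}$ are \emph{not} generically disjoint from $E$ (each meets the exceptional $\P^{c-1}$-bundle in a hyperplane subbundle), but log canonicity at the generic point of $Z$ still holds because $E+\sum\tilde{D_i}$ is simple normal crossing there with all coefficients equal to $1$.
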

	Note that Theorem \ref{ci} implies that any subvariety of a smooth variety is a log canonical center.

	The structure of the proof can be summarized as follows: first, we will embed $Z$ in a complete intersection $W$ and consider the birational morphism $f: Y\to X$, where $Y$ is the normalization of the blow up of $X$ at $W$. Then we will produce a divisor $\Delta$ such that the unique component of the exceptional divisor of $f$ that dominates $Z$ has discrepancy $-1$ for the pair $(X,\Delta)$.

	We will begin by reducing to the case where $X$ is Cohen-Macaulay, as being an lc center is a generic property. More precisely: let $(X,\Delta)$ be a pair and let $U\subseteq X$ be an open set. If $Z$ is an irreducible subvariety of $X$ with $U \cap Z \neq \emptyset$ and $U \cap Z$ is an lc center for $(U, \Delta\left|_U \right.)$ then it is easy to see that $Z$ is an lc center for $(X, \Delta)$.

	We can also assume that where $Z$ is a complete intersection in $X$.
\begin{lm}\label{yayci}
	Every subvariety $Z$ of a Cohen-Macaulay variety $X$ is a component of a reduced complete intersection in $X$.
\end{lm}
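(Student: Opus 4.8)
The plan is to produce the complete intersection by choosing $r = \codim_X Z$ general forms vanishing on $Z$ and then to verify reducedness through Serre's criterion. Write $S$ for the homogeneous coordinate ring of $X$ and let $\mathfrak{p} \subseteq S$ be the homogeneous prime ideal of $Z$, so that $\operatorname{ht} \mathfrak{p} = r$. I would first fix $d \gg 0$ so that the twisted ideal sheaf $\mathcal{I}_{Z/X}(d)$ is globally generated, and then build a regular sequence $F_1, \ldots, F_r \in \mathfrak{p}_d$ one element at a time.

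The inductive step uses the Cohen--Macaulay hypothesis through the unmixedness theorem: if $F_1, \ldots, F_i$ is a regular sequence with $i < r$, then every associated prime of $S/(F_1, \ldots, F_i)$ is minimal of height exactly $i$, and hence cannot contain $\mathfrak{p}$, which has height $r > i$. Homogeneous prime avoidance then yields a form $F_{i+1} \in \mathfrak{p}_d$ that is a nonzerodivisor modulo $(F_1, \ldots, F_i)$, extending the regular sequence while staying inside $\mathfrak{p}$. After $r$ steps I obtain $W = V(F_1, \ldots, F_r)$, which is a complete intersection, is Cohen--Macaulay (being cut out of the Cohen--Macaulay $X$ by a regular sequence), and has pure codimension $r$. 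Since $Z \subseteq W$ is irreducible of codimension $r$ and $W$ has no components of other codimensions, $Z$ must be one of the irreducible components of $W$.

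It remains to arrange that $W$ is reduced. As $W$ is Cohen--Macaulay it automatically satisfies $S_1$, so by Serre's criterion it suffices to verify $R_0$, i.e.\ that $W$ is regular at the generic point of each of its components. At the generic point $\eta_Z$ of $Z$ — which we may assume lies in $X_{\mathrm{reg}}$, so that $\mathcal{O}_{X,\eta_Z}$ is regular of dimension $r$ — global generation makes the images of the global sections span $\mathfrak{m}_{\eta_Z}/\mathfrak{m}_{\eta_Z}^2$, so a general choice of the $F_i$ generates $\mathfrak{m}_{\eta_Z}$ by Nakayama and $\mathcal{O}_{W,\eta_Z}$ is a field. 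For the remaining components I would invoke Bertini in characteristic zero: the linear system $|\mathcal{I}_{Z/X}(d)|$ is base-point-free away from $Z$, so iterating Bertini on $X_{\mathrm{reg}} \setminus Z$ shows that for general $F_1, \ldots, F_r$ the scheme $W$ is smooth, hence reduced, at every point of $X_{\mathrm{reg}} \setminus Z$, in particular at the generic point of every component other than $Z$ (such a generic point cannot lie in $Z$, for that would force the two codimension-$r$ irreducible components to coincide).

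The main obstacle is this Bertini-type step: one must check that the successive general sections meet $X_{\mathrm{reg}} \setminus Z$ in the expected codimension and are generically smooth there, and one must rule out components of $W$ whose generic points lie in $\Sing X$. This is precisely why the reduction to the situation where $Z$ meets $X_{\mathrm{reg}}$ (and where one may shrink $X$ to a convenient open neighborhood) is carried out before applying the lemma; on such a locus the required generic-reducedness statements follow from the standard characteristic-zero Bertini theorems.
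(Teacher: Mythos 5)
Your proposal is correct and follows essentially the same route as the paper's (Lemma \ref{yayci} via Lemma \ref{ciinideal}): general homogeneous forms of a fixed degree in $I_Z$ are assembled into a regular sequence by prime avoidance (you invoke unmixedness of the Cohen--Macaulay ring where the paper uses reducedness of each successive quotient), and reducedness comes from Serre's criterion, with Cohen--Macaulayness supplying $S_1$ and Bertini supplying $R_0$. The only real difference is organizational: the paper verifies reducedness inductively for each partial intersection, whereas you check it once for the final $W$ and, in doing so, explicitly treat the generic point of $Z$ itself --- where Bertini is silent because it lies in the base locus --- via global generation and Nakayama, a detail the paper's argument passes over.
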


\begin{proof}
	Let $S$ be the homogeneous coordinate ring of $X$ in $\P^n$. The ideal $I_Z \subseteq S$ is prime, hence we can apply Lemma \ref{ciinideal} to get a (homogeneous) non zerodivisor $g_1\in I_Z$ such that $S/g_1$ is reduced and CM. Continuing in this fashion one gets a regular sequence $g_1,\ldots,g_r$ contained in $I_Z$, where $r = \textrm{codim }Z$. Since the ideal  is radical and equidimensional, then it's the intersection of the minimal prime ideals containing it. By construction, the height of $I_Z$ is the same as the height of $(g_1, \ldots, g_r)$, so $I_Z$ has to be one of its minimal primes, hence $Z$ is a component of the complete intersection cut out in $X$ by the $g_i$'s.
\end{proof}

\begin{lm}\label{ciinideal}
	Let $R$ be a finite type reduced graded Cohen-Macaulay $k$-algebra, with $\newchar k = 0$, and let $\p$ be a prime in $R$ of height $\geq 1$. Then there is a homogenous element $g\in \p$ such that $g$ is a non zerodivisor and $R/g$ is reduced and CM.
\end{lm}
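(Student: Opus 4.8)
The plan is to produce $g$ as a general homogeneous element of high degree lying in $\p$ and to verify the three requirements separately, the reducedness being the substantial one. Since $R$ is reduced and Cohen--Macaulay it is unmixed with no embedded primes, so $\mathrm{Ass}(R)$ is exactly the finite set of minimal primes $\q_1,\dots,\q_s$ and the zerodivisors of $R$ are $\bigcup_i\q_i$. As $\p$ has height $\geq 1$ it is contained in none of the $\q_i$, so homogeneous prime avoidance (valid since $\newchar k=0$, hence $k$ infinite) lets me choose $g\in\p_d$, for $d\gg 0$, outside every $\q_i$; equivalently a general member of the space $\p_d$ avoids the finitely many proper subspaces $\p_d\cap\q_i$ and is therefore a non zerodivisor. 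The Cohen--Macaulay conclusion is then automatic, since cutting a Cohen--Macaulay ring by a single non zerodivisor drops depth and dimension by one and preserves Cohen--Macaulayness; in particular $R/g$ is $S_1$.

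It remains to force $R/g$ reduced, and here I would invoke Serre's criterion: as $R/g$ is already $S_1$, it is reduced as soon as it is $R_0$, i.e. regular at each of its minimal primes. Because $g$ is a non zerodivisor in the Cohen--Macaulay ring $R$, Krull's theorem together with unmixedness shows that every prime minimal over $(g)$ has height exactly $1$ in $R$, and these are precisely the minimal primes of $R/g$. Thus I must arrange that for each such $\q$ the local ring $R_\q$ is a DVR and $g$ has valuation one in it. Now $\Sing R$ has codimension $\geq 1$ (as $R$ is reduced); let $\q_1',\dots,\q_t'$ be the generic points of its codimension--$1$ components. When $\codim \p\geq 2$ none of the $\q_j'$ contains $\p$ (else $\codim\p\leq 1$), so prime avoidance again lets the general $g\in\p_d$ avoid all of them, forcing every minimal prime of $R/g$ to lie at a regular codimension--$1$ point of $R$, where $R_\q$ is a DVR.

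Finally, at these DVRs I need $v_\q(g)=1$, which is exactly the assertion that a general member of $|\p_d|$ vanishes to order one along each component of its zero scheme. On the open set $U=\pr R\setminus V(\p)$ the system $\p_d$ is base point free and, for $d\gg 0$, very ample, so the classical Bertini theorem in characteristic zero applies and a general section is reduced along $U$ while meeting $\Sing R$ properly; this gives $R_0$, hence reducedness, away from the base locus. The hard part will be the interaction with the base locus $V(\p)$ and the singular locus: one must check that no component of $V(g)$ is absorbed into $V(\p)$ or into a codimension--$1$ singular stratum, which is what the height computation above secures, and the multiplicity--one control genuinely requires $\newchar k=0$ (in positive characteristic a general section of an inseparable system can be everywhere non--reduced). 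I expect the only truly delicate case to be $\codim\p=1$, where $\p$ is itself a minimal prime of $R/g$ for every $g\in\p$; there reducedness forces $R_\p$ to be a DVR, and one concludes by taking $g\in\p_d$ general so that $g\notin\p^{(2)}$, giving $v_\p(g)=1$.
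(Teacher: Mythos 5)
Your overall strategy tracks the paper's proof closely: take $g$ to be a general homogeneous element of $\p$ of large degree, obtain the non-zerodivisor property from prime avoidance (since $\p$ is contained in no minimal prime), get Cohen--Macaulayness for free by cutting with a regular element, and deduce reducedness from Serre's criterion with $S_1$ coming from CM and $R_0$ from Bertini. In the case $\codim\p\geq 2$ you are in fact more careful than the paper: you correctly note that no component of $V(g)$ can be absorbed into the base locus $V(\p)$ or into a codimension-one stratum of $\Sing R$, so every minimal prime of $(g)$ sits at a regular codimension-one point of $R$ where the characteristic-zero Bertini theorem gives multiplicity one. That part of the argument is sound.

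The genuine gap is exactly where you flagged it, in the case $\codim\p=1$, and as written it cannot be closed. The phrase ``reducedness forces $R_\p$ to be a DVR'' states a \emph{necessary condition on the conclusion}, not something the hypotheses supply: a reduced Cohen--Macaulay graded ring can fail to be regular at a height-one homogeneous prime. Concretely, let $R=k[x,y,z]/(zy^2-x^3)$ (the cone over a cuspidal cubic) and $\p=(x,y)$; then $R$ is a reduced CM hypersurface ring and $\p$ is a height-one prime, but $\mu(\p R_\p)=2$, so for \emph{every} homogeneous $g\in\p$ the ring $(R/g)_\p$ is an Artinian local ring with nonzero maximal ideal, i.e.\ $R/g$ is non-reduced along $V(\p)$. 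Choosing $g\notin\p^{(2)}$ does not help: it gives $g\notin\p^2R_\p$, but reducedness requires $gR_\p=\p R_\p$, which is impossible when $\p R_\p$ is not principal. So your proof is incomplete at this last step --- although, to be fair, the paper's own proof has the same blind spot, since its Bertini statement only controls the general member on the nonsingular locus and away from $Z(\p)$, which says nothing about the generic point of $V(\p)$ when $\p$ has height one; the example above shows that the lemma as stated actually needs an additional hypothesis there (regularity of $R$ at the height-one primes containing $\p$), so your instinct that this is ``the only truly delicate case'' was exactly right, but the delicacy is fatal rather than merely technical.
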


\begin{proof}
	Write $R = S/I$ where $S$ is a graded polynomial ring over $k$, and let $f_1,\ldots, f_N$ be homogeneous polynomials of the same degree $d$ such that $\p_{\geq d}$ is generated by the images of $f_i$'s in $S/I$.
	The idea is to look for an element of the form
\[
	g = \sum\lambda_i f_i,\quad\lambda_i\in k
\]
	Let's first establish that at least a non zerodivisor of that form exists: if all linear combinations were zero divisors, then they would lie in the union of the minimal primes $\p_i$ of $R$, thus implying that $\p_{\geq d}\subseteq \bigcup \p_i$. If $a\in\p$ then for some $q$ we would have $a^q\in \p_{\geq d}$, thus $a^q\in \bigcup \p_i$, which in turn forces $a\in \bigcup\p_i$. This would show that $\p\subseteq\bigcup\p_i$ and by the prime avoidance lemma then $\p\subseteq\p_j$ for some $j$, which is impossible by the assumption on the height of $\p$: therefore at least one of the $f_i$'s is not a zero divisor.

	Now we shall show that there is a non empty open set $U\subseteq k^N$ such that $g$ is a non zerodivisor in $R$ for all $(\lambda_i)\in U$. Indeed, $g$ is a zerodivisor iff it is contained in one of the minimal primes $p_i$'s, and this happens precisely when $(\lambda_i)$ is in the kernel of the $k$-linear map
	\[
		\psi_i: k^N\to R_{\p_i}/\p_i R_{\p_i}, (\lambda_i)\mapsto g
	\]
By the reasoning in the previous paragraph, the kernel of $\psi_i$ is not the whole of $k^N$, and setting $U = k^N\setminus \bigcup \ker \psi_i$ yields the desired open set.

	Since $\newchar  k = 0$, then by Bertini's theorem the generic element $g$ of the linear system generated by the $f_i$'s is smooth on the nonsingular locus of $Z(I)$ away from $Z(\p)$, and from the discussion above $g$ is also a non zerodivisor. In particular, $R/g$ is reduced at the generic point of each component, hence it's $R_0$. Since $R$ is CM and $g$ a non zerodivisor, we have that $R/g$ is CM too: in particular it's $S_1$. It follows that $R/g$ is reduced.
	Finally, note that since $R$ is CM and $g$ is a non zerodivisor, then $R/g$ is CM as well.
\end{proof}

Note that this implies that every subvariety of $\P^n$ is a component of a reduced complete intersection, a fact that is quite interesting on its own.

Now we will proceed to compute the mutiplicities of the exceptional divisors of the blow up of $X$ at a complete intersection.

\begin{lm}\label{cicomponents}
	Let $X$ be $\Q$-Gorenstein and CM, and let $W$ be a complete intersection in $X$. Write $W = W_{1}\cup\ldots\cup W_{q}$ as the union of its irreducible components, and let $Y$ be the normalization of the blow up of $X$ at $W$. Then the exceptional divisor $E$ has exactly $r$ components $E_{i}$ and moreover we have
	\[
		K_{Y} \sim f^{*}K_{X} + (r-1) E_{1} + \ldots + (r-1)E_{q}
	\]
\end{lm}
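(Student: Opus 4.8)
The key point is that both the number of exceptional components and their discrepancies are codimension-one data on the normal variety $Y$, so they may be read off at the generic point $\eta_i$ of each irreducible component $W_i$ of $W$. This reduces everything to the local ring $A := \mathcal{O}_{X,\eta_i}$, which is Cohen--Macaulay of dimension $r = \codim_X W_i$ and in which $I := I_W A$ is generated by the regular sequence $g_1,\dots,g_r$ cutting out $W$. The first observation is that $X$ is automatically smooth at $\eta_i$: since $W$ is reduced and $\eta_i$ is the generic point of one of its components, $A/I = \mathcal{O}_{W,\eta_i}$ is a reduced Artinian local ring, hence a field, so $I = \m_A$; but then $\m_A$ is generated by $r = \dim A$ elements, which forces $A$ to be regular. (This is exactly where reducedness enters: for a non-reduced complete intersection the discrepancies genuinely change, so the hypothesis cannot be dropped.)

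Consequently, in a neighbourhood of $\eta_i$ the morphism $f$ is nothing but the blow-up of a regular $r$-dimensional local ring along its maximal ideal. Its exceptional divisor is a single $\P^{r-1}$, and the classical computation for blowing up a smooth point gives discrepancy $r-1$. Thus over each $W_i$ there is exactly one exceptional divisor $E_i$, of multiplicity $r-1$; moreover $\mathrm{Bl}_W X$ is already regular (hence normal) along the generic point of this divisor, so the normalization $\nu\colon Y\to\mathrm{Bl}_W X$ is a local isomorphism there.

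It remains to check that $E$ has no further components. Since $g_1,\dots,g_r$ is a regular sequence, the associated graded algebra is a polynomial ring, $\operatorname{gr}_I\mathcal{O}_X\cong(\mathcal{O}_X/I)[T_1,\dots,T_r]$, so the exceptional divisor of $\mathrm{Bl}_W X$ equals $\pr\operatorname{gr}_I\mathcal{O}_X$, namely the projective bundle $\P^{r-1}_W$, whose irreducible components are precisely the $q$ bundles over $W_1,\dots,W_q$. Now $\nu$ is finite and birational, hence carries each prime divisor of $Y$ to a prime divisor of $\mathrm{Bl}_W X$ of the same dimension; any such divisor contained in $f^{-1}(W)$ therefore maps onto one of the $q$ components of $\P^{r-1}_W$, and by the previous paragraph exactly one divisor of $Y$ lies over each. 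This produces precisely $q$ exceptional components $E_1,\dots,E_q$. Finally, as $f$ is an isomorphism away from $W$, the difference $K_Y - f^*K_X$ is supported on $\bigcup E_i$, and its coefficient along each $E_i$ is the discrepancy $r-1$ computed above, giving $K_Y\sim f^*K_X+(r-1)(E_1+\dots+E_q)$.

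The main obstacle is controlling the normalization: a priori it could create additional exceptional divisors, merge the $q$ bundle components, or alter the discrepancies. What rescues the argument is the smoothness at the generic points $\eta_i$ forced by reducedness, which makes $\nu$ a local isomorphism exactly where the relevant codimension-one data live; the numerical value $r-1$ itself is then just the standard blow-up-of-a-point computation.
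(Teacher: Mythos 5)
Your proof is correct and follows essentially the same route as the paper's: identify the exceptional divisor of the blow-up as a $\P^{r-1}$-bundle over $W$ (so it has $q$ components), observe that the blow-up is regular at the generic points of these components so that normalization changes nothing in codimension one, and compute the discrepancy $r-1$ by the standard smooth blow-up formula near the generic point of each $W_i$. Your explicit observation that reducedness of $W$ forces $\mathcal{O}_{X,\eta_i}$ to be regular (since its maximal ideal is generated by $r=\dim$ elements) is a nice touch that the paper leaves implicit when it restricts to a neighborhood of the smooth locus of $W_i$ on which $X$ is smooth.
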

 
\begin{proof}
	Let $Y'$ be the blow up of $X$ at $W$, with exceptional locus $E$. Since $W$ is a complete intersection, then the ideal of $W$ in $X$ is locally generated by a regular sequence, as one can easily check. Therefore, the very same proof of \cite[Theorem II.8.24(b)]{hart77} gives that the exceptional divisor is a projective bundle over $W$, hence $E$ has exactly as many components as $W$. It follows that the preimage $V$ of the smooth locus of $W$ is smooth in $Y'$: in particular $Y'$ is regular in codimension one.
	
	Let $n: Y\to Y'$ be the normalization map and let $f: Y\to X$ be the composition of $Y$ with the blow up map. The R1 condition on $Y'$ implies that $n$ is an isomorphism in codimension one, therefore the exceptional locus has codimension at least $2$ and $n: n^{-1}E\to E$ induces an isomorphism over the smooth locus of each component $E_i$ of $E$. Therefore, in order to compute the discrepancies of the $E_i$'s we can restrict to looking at the smooth locus of $W_i$. Let $U_i$ be a neighborhood of the smooth locus of $W_i$ in $X$ on which $X$ is smooth. By \cite[Ex. II.8.5]{hart77} we have that
	\[
		K_{Y}  \left|_{f^{-1}U_i}\right.\sim {f^{*}K_{X}}\left|_{f^{-1}U_i}\right. + {(r-1)E_i}\left|_{f^{-1}U_i}\right.
	\]
	Since $U_i$ cannot contain any other components of $W$ other than $W_i$, this concludes the proof.
\end{proof}
 
Now we need to produce a boundary $\Delta$: if $D_i$ denotes the divisor determined by $F_i$, then a natural choice is to set $\Delta = D_1+\ldots +D_r$. It turns out that this actually works. First of all we check that $(X,\Delta)$ satisfies the condition (1) of the definition of lc center.

\begin{lm}\label{sncdelta}
	The pair $(X,\Delta)$ is snc away from the union of the singular loci of $W$ and $X$.
\end{lm}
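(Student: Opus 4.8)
The plan is to verify the simple normal crossing condition pointwise and locally, reducing everything to a statement about regular systems of parameters. Fix a point $p \in X$ with $p \notin \Sing X \cup \Sing W$; then $\mathcal{O}_{X,p}$ is a regular local ring (of dimension $\dim X$) with maximal ideal $\m_p$. Writing $f_i$ for a local equation of $D_i$ at $p$, the pair $(X,\Delta)$ is snc at $p$ precisely when the subset of the $f_i$ lying in $\m_p$ (i.e.\ those $D_i$ actually passing through $p$) has linearly independent images in $\m_p/\m_p^2$, equivalently when these $f_i$ form part of a regular system of parameters of $\mathcal{O}_{X,p}$. I would split the verification into the two cases $p \in W$ and $p \notin W$.

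The substantive case is $p \in W$, where all $r$ of the $D_i$ pass through $p$ and we must show $f_1,\dots,f_r$ is part of a regular system of parameters. Here I would use that $W$ is a complete intersection of codimension $r$ cut out by the $f_i$, so that $\mathcal{O}_{W,p} = \mathcal{O}_{X,p}/(f_1,\dots,f_r)$. Since $p \notin \Sing W$, this quotient is regular of dimension $\dim X - r$; by the standard characterization of when a quotient of a regular local ring by $r$ elements is again regular of the expected dimension, the $f_1,\dots,f_r$ must have independent classes in $\m_p/\m_p^2$ and hence extend to a regular system of parameters. In these coordinates the $D_i$ are the first $r$ coordinate hyperplanes, so $(X,\Delta)$ is snc at $p$.

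It remains to treat $p \notin W$, and this is where I expect the real difficulty. Now only a proper subset $S \subsetneq \{1,\dots,r\}$ of the divisors passes through $p$, and the smoothness of $W$ gives no control over them: a single $D_i$ could a priori be singular at such a $p$, or two of them could be tangent there, and neither phenomenon is detected by $\Sing W \cup \Sing X$. The point to exploit is that the $F_i$ are the \emph{general} forms produced by the Bertini argument of Lemma~\ref{ciinideal}: each $D_i$ is then smooth away from $\Sing X$ and the base locus (contained in $W$), and choosing the $F_i$ one at a time as in the construction of the regular sequence lets me arrange, via Bertini/Kleiman transversality in characteristic zero, that each successive $D_i$ meets the partial intersection $\bigcap_{j<i} D_j$ transversally away from $\Sing X \cup W$. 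Thus for $p \notin W$ the divisors indexed by $S$ are smooth and meet transversally at $p$, giving snc. The main obstacle is precisely this off-$W$ transversality: it is the step that forces the use of the genericity of the defining forms, rather than the mere existence of the complete intersection $W$.
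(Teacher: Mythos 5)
Your first case ($p \in W$) is correct and is in substance the paper's own argument: the paper verifies via the Jacobian criterion in $\P^n$ that smoothness of $X$ and of $W$ at $p$ forces each $\nabla F_i$ to be nonzero and the tangent spaces $T_pD_i$ to meet in codimension $r$, which is exactly your statement that $f_1,\dots,f_r$ extend to a regular system of parameters of $\mathcal{O}_{X,p}$ because $\mathcal{O}_{X,p}/(f_1,\dots,f_r)\cong\mathcal{O}_{W,p}$ is regular of dimension $\dim X - r$.

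Your second case points at a genuine imprecision in the paper rather than a defect in your own reasoning. The paper's proof tacitly assumes $p\in W$ throughout (both the rank computation for the Jacobian of $W$ at $p$ and the identification of $T_pD_1\cap\dots\cap T_pD_r\cap T_pX$ with $T_pW$ only make sense there), so points of $X\setminus(W\cup\Sing X)$ are simply not treated. And indeed, for an arbitrary regular sequence cutting out $W$ --- which is all the lemma assumes --- the statement is false off $W$: for instance $W=V(x-z,\;y^2z-x^3)\subset\P^2$ is a reduced, smooth, zero-dimensional complete intersection, yet $V(y^2z-x^3)$ has a cusp at $[0:0:1]\notin W$, so the pair fails to be snc there even though $\Sing W\cup\Sing X=\emptyset$. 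So your instinct that this case requires the genericity of the $F_i$ is right, and no argument from the lemma's stated hypotheses can close it. Two caveats on your proposed repair: the genericity is not among the hypotheses of the lemma, so you would be proving a different (corrected) statement; and even importing Lemma \ref{ciinideal}, the Bertini argument there produces the successive $g_i$ as elements of quotient rings and controls smoothness of the partial intersections, not of the divisors $V(g_i)\subset X$ themselves, so one would still need to choose lifts and prove the off-$W$ transversality you describe. The reason none of this damages Theorem \ref{ci} is that condition (1) of Definition \ref{lcdefi} only requires $(X,\Delta)$ to be log canonical at the generic points of the components of $W$, and those points lie in $W\setminus(\Sing W\cup\Sing X)$, i.e.\ squarely in your first case; the cleanest fix is therefore to weaken the lemma to assert snc only at smooth points of $W$ away from $\Sing X$.
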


\begin{proof}
	Let $U = X \setminus \left(\Sing X \cup \Sing W\right)$, and let $p\in U$. We will first prove that every component $D_i$ of $\Delta$ is smooth at $p$.

	Let $(G_1,\ldots,G_s)$ be the ideal of $X$ in $\P^n$, and assume that $W$ is cut out by the regular sequence $F_1,\ldots,F_r$. Then the Jacobian of $W$ as a subvariety of $\P^n$ is given by
	\[
	J = \left[
			\begin{array}{c}
  				\nabla F_1  \\
  				\ldots \\
  				\nabla F_r   \\
				\nabla G_1  \\
  				\ldots \\
  				\nabla G_s  
			\end{array}
		\right]
	\]
	where the zero locus of $F_i$ is the component $D_i$ of $\Delta$.
	
	Let $r' = n - \dim X$: since $X$ is smooth, then the rank of the submatrix of $J(p)$ obtained by considering the last $s$ rows is $r'$. If any of the first $r$ rows were zero, then the rank of $J(p)$ would be less than $r+r'$, and this would contradict the smoothness of $p$. Therefore $\nabla G_i \neq 0$ for all $i$'s: this shows that the divisors $D_i$ are smooth at $p$.
	
	We now claim that the $D_i$'s intersect transversely at $p$: this amounts to showing that 
		\[
			\textrm{codim }_{T_p X} (T_pD_1\cap\ldots\cap T_pD_r \cap T_p X) = \codim_{X,p}(D_1) + \ldots + \codim_{X,p}(D_r)
		\]
		
	Observe that each term in the right hand side of the above equality is $1$, hence the r.h.s. is equal to $r$. On the other hand, since $T_pD_1\cap\ldots\cap T_pD_r \cap T_p X$ is the tangent space to $W$ at $p$, counting (co)dimensions yields that the l.h.s. is equal to $r$, too.
	\end{proof}

Now we compute the pullbacks of the components of $\Delta$. The idea is that the fact that they meet transversely forces each $D_i$ to contain each component of $W$ with multiplicity one. More precisely, we have the following

\begin{lm} \label{snc}
	Let $\tilde{D_{i}}$ the proper transform of $D_{i}$. Then
		\[
			f^{*}(D_{i}) \sim \tilde{D_{i}} + E_{1}+\ldots + E_{q}
		\]
\end{lm}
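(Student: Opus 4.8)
The plan is to compute, for each exceptional component $E_j$, the multiplicity $m_{ij}$ with which it occurs in the total transform $f^*(D_i)$, and to show that $m_{ij}=1$ for every pair $(i,j)$. Since $D_i$ is the divisor cut out by $F_i$ and $W = D_1\cap\ldots\cap D_r$, each $D_i$ contains $W$, hence contains every component $W_j$; thus $E_j$ genuinely appears in $f^*(D_i)$ and we may write
\[
	f^*(D_i) = \tilde{D_i} + \sum_{j=1}^{q} m_{ij}E_j,
\]
where $m_{ij}$ is the order of vanishing of $F_i$ along $E_j$. Everything then reduces to showing each such order is exactly one.

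First I would reduce to a local computation at the generic point of each $E_j$. As in the proof of Lemma \ref{cicomponents}, the normalization $n: Y\to Y'$ is an isomorphism in codimension one, so the coefficient of $E_j$ is unaffected by passing to $Y'$, and it suffices to compute the vanishing order of $F_i$ along $E_j$ over a neighborhood of the smooth locus of $W_j$. I would choose an open set $U_j\subseteq X$ meeting $W_j$ in its smooth locus and on which $X$ is smooth, just as in Lemma \ref{cicomponents}; the generic point of $E_j$ then lies over $U_j\cap W_j$, and $U_j$ contains no component of $W$ other than $W_j$.

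Next I would invoke the transversality established in Lemma \ref{sncdelta}. Over $U_j$ the forms $F_1,\ldots,F_r$ cut out the smooth subvariety $W_j$ transversely, so at a general point of $W_j$ they form part of a regular system of parameters. Consequently $f^{-1}(U_j)\to U_j$ is the blow up of a smooth subvariety of codimension $r$ in a smooth variety, and in a suitable local chart one may take coordinates in which $E_j = \{t=0\}$ and each $F_i$ pulls back to $t$ times a unit. Hence $F_i$ vanishes to order exactly one along $E_j$, giving $m_{ij}=1$ and therefore $f^*(D_i)\sim \tilde{D_i}+E_1+\ldots+E_q$.

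The step I expect to be the main obstacle is precisely the passage to the local smooth model: one must confirm that the coefficient of $E_j$ in the total transform is correctly read off from the order of vanishing at the generic point of $E_j$ (which the codimension-one isomorphism of the normalization guarantees), and that Lemma \ref{sncdelta} really supplies $F_1,\ldots,F_r$ as part of a local coordinate system there. It is the transversality, rather than merely the containment $W_j\subseteq D_i$, that forces the vanishing order to be exactly one and not higher; without it a single $F_i$ could a priori be singular along $W_j$ and contribute a larger multiplicity.
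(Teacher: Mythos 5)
Your proof is correct and follows the same strategy as the paper: both arguments reduce to computing the coefficient of $E_j$ at its generic point, which lies over the smooth locus of $W_j$, where Lemma \ref{sncdelta} applies and the normalization changes nothing in codimension one. The only difference is the final local step: you read the multiplicity off the standard charts of the blow-up of a smooth center, using the full transversality to realize $F_1,\ldots,F_r$ as part of a regular system of parameters, whereas the paper works with the Rees-algebra presentation of the blow-up and shows that $F_i$ lies in $J\setminus J^2$ for $J$ the ideal of the exceptional divisor, which only uses that each $D_i$ individually is smooth along $W$ (i.e.\ that $F_i\notin I_W^2$ locally).
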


\begin{proof}
	The proof of Lemma \ref{cicomponents} implies that the $E_i$'s are the only exceptional divisors of $f$. Therefore
	\[
	f^{*}(D_{i}) \sim \tilde{D_{i}} + \mu_1E_{1}+\ldots + \mu_r E_{q}
	\]
	for some $\mu_i\in \ZZ$. The idea is to compute the $\mu_j$'s locally on the the smooth locus of the $W_j$ and to show that they are all equal to $1$.
	
	We also know from Lemma \ref{sncdelta} that every $D_{i}$ is smooth on an open subset $U$ of $W_{j}$. Let's then work locally and assume that $W\subset X =\sp R $ is a smooth integral complete intersection given by the ideal $I = (f_1,\ldots,f_r)$ of the ring $R$ and $D_i = D$ is the zero set of an element $g\in I$, with $D$ smooth along $W$.
	
	With this setup, the blow up $Y = Bl_W X \subset X \times \P^{r-1}$ is given by taking Proj of the graded ring
	\[
	S = R[t_1,\ldots,t_r]/(t_if_j - t_jf_i  | i,j=1,\ldots,r)
	\]
	and the ideal $J$ of the exceptional divisor $E$ is the ideal generated by $I$ in $S$. We want to show that $g\in J\setminus J^2$, where $g$ is the image of $g$ under the natural injection $R\hookrightarrow S$, as this will ensure that $f^{*}(D) \sim \tilde{D} + E$.
	
	Assume not, i.e. suppose we can write
	\[
	g = \sum \alpha_{ij}(x,t) f_i f_j, \quad \alpha_{ij}(x,t)\in S
	\]
	Then this would imply
	\[
	g = \sum \alpha_{ij}(x,0) f_i f_j, \quad \alpha_{ij}(x,0)\in R
	\]
	thus $g\in I^2$, which contradicts the assumption that $D$ is smooth along $W$.
	\end{proof}
	
	Putting everything together, we can prove the main result of this section.

\begin{proof}[Proof of Theorem \ref{ci}]
	By using Lemma \ref{yayci} we can assume that $Z$ is a component of a complete intersection $W$ in $X$, so $Z = W_i$ for some $i$. 
	Using the same notation as before, with $\Delta = D_{1}+\ldots + D_{r}$ we get
	\begin{eqnarray*}
		K_{Y} - f^{*}(K_{X}+\Delta) & \sim & (r-1) \sum E_{i} - f^{*}\Delta\\
							 & \sim & (r-1) \sum E_{i} - \sum \tilde{D_{i}} - r\sum E_{i}\\
							 & \sim & - \sum \tilde{D_{i}} - \sum E_{i}
	\end{eqnarray*}
from which it follows that the discrepancy of $E_i$ with respect to $(X, \Delta)$ is $-1$ for all $i$'s. This implies that every component $W_{i}$ of $W$ satisfies the condition $(2)$ of Definition \ref{lcdefi} for the pair $(X,\Delta)$. Moreover, $(X,\Delta)$ satisfies (1) by Lemma \ref{snc}.
\end{proof}

It is worth to point out that with these hypotheses on $X$ and $Y$ the pair $(X,\Delta)$ might not be lc at every point of $X$, since if the pair $(X,\Delta)$ is lc then every log canonical center has to be seminormal and Du Bois (see \cite[Section 4.3 and Theorem 5.14]{kollar2013sing}).

\section{Du Bois singularities}
\label{DB}
	If $X$ is a smooth variety, then one can construct a special resolution of the constant sheaf, namely the DeRham complex. It turns out that an analogue construction can be carried out for an arbitrary normal variety $X$ over $\mathbb{C}$, yielding the \emph{Deligne-Du Bois} complex: it is an object $\underline{\Omega}^\bullet_X$ in $D_{coh}^\flat(X)$ that has a natural map $\mathcal{O}_X \to \underline{\Omega}^\bullet_X$. If this last morphism is a quasi-isomorphism, then we say that $X$ has \emph{Du Bois singularities}.
	
	The technical details of the definition can be found in \cite[Chapter 6]{kollar2013sing}: here we state Schwede's equivalent definition (\cite{schwede2007simple}).
	
	\begin{defi}
	Let $X$ be normal and let $X\subset Y$ be an embedding in a smooth scheme. Let $f: Z\to Y$ be a log resolution that is an isomorphism outside of $X$, and let $E$ be the pre image with the reduced induced subscheme structure. Then $X$ has \emph{Du Bois} singularities if the natural map $\mathcal{O}_{X}\to Rf_{*}\mathcal{O}_{E}$ is a quasi-isomorphism. 
	\end{defi}
	
	We are interested in Du Bois singularities since in many cases of interest they are closely related to log canonical singularities.

	In one direction we have:
	\begin{thm}[\cite{kovacs1999rational}]
	Let $X$ be a normal variety with DB singularities such that $K_X$ is Cartier. Then $X$ is log canonical.
	\end{thm}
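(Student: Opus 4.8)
The plan is to test the log canonical condition on a single fixed log resolution and to translate it, via Grothendieck--Serre duality, into an inclusion of canonical sheaves that the Du Bois hypothesis controls. Fix a log resolution $f\colon Y\to X$ whose exceptional locus is a reduced simple normal crossing divisor $E=\sum_i E_i$, and write $K_Y \sim f^{*}K_X + \sum_i a_i E_i$. Because $K_X$ is Cartier, each discrepancy $a_i$ lies in $\ZZ$, so $X$ is log canonical exactly when $a_i\geq -1$ for every $i$, i.e.\ when $\sum_i (a_i+1)E_i$ is an effective exceptional divisor. Since $X$ is normal, $f$ has connected fibres, and every center $f(E_i)$ has codimension $\geq 2$, a Hartogs argument shows $f_{*}\omega_Y(E)=\omega_X\otimes \mathcal{J}$ where $\mathcal{J}=\{\,s\in\mathcal{O}_X : \operatorname{ord}_{E_i}(f^{*}s)\geq -(a_i+1)\ \forall i\,\}\subseteq\mathcal{O}_X$. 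Thus there is always a natural inclusion $f_{*}\omega_Y(E)\subseteq\omega_X$, and $\mathcal{J}=\mathcal{O}_X$ (equivalently $f_{*}\omega_Y(E)=\omega_X$) holds if and only if no $a_i\leq -2$. The whole problem therefore reduces to proving the reverse inclusion $\omega_X\subseteq f_{*}\omega_Y(E)$.

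The Du Bois condition is what I would use to produce this reverse inclusion, and the mechanism is duality. Recall that $X$ Du Bois means $\mathcal{O}_X\xrightarrow{\sim}\underline{\Omega}^{\bullet}_X$ in degree zero. Writing $n=\dim X$ and letting $\omega_X^{\bullet}$ be the dualizing complex (I do not assume $X$ Cohen--Macaulay, so this is genuinely a complex with $\mathcal{H}^{-n}(\omega_X^{\bullet})=\omega_X$), Grothendieck duality for $f$ gives the identifications
\[
R\mathcal{H}om_X(Rf_{*}\mathcal{O}_Y,\ \omega_X^{\bullet})\cong Rf_{*}\omega_Y[n],\qquad R\mathcal{H}om_X(Rf_{*}\mathcal{O}_Y(-E),\ \omega_X^{\bullet})\cong Rf_{*}\omega_Y(E)[n],
\]
while $R\mathcal{H}om_X(\mathcal{O}_X,\omega_X^{\bullet})=\omega_X^{\bullet}$. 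The plan is to feed the Du Bois isomorphism, through the standard description of $\underline{\Omega}^{\bullet}_X$ in terms of the resolution data $Rf_{*}\mathcal{O}_Y$ and $Rf_{*}\mathcal{O}_E$, into the dual of the inclusion $\mathcal{O}_Y(-E)\hookrightarrow\mathcal{O}_Y$. Dualizing should yield that the natural map $Rf_{*}\omega_Y[n]\to Rf_{*}\omega_Y(E)[n]$ is compatible with a splitting coming from $\mathcal{O}_X\simeq\underline{\Omega}^{\bullet}_X$, and taking the lowest cohomology sheaf $\mathcal{H}^{-n}$ then exhibits $\omega_X$ as landing inside $f_{*}\omega_Y(E)$.

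Concluding from there is immediate: the surjectivity extracted in degree $-n$ gives $\omega_X\subseteq f_{*}\omega_Y(E)$, which combined with the automatic inclusion of the first paragraph forces $f_{*}\omega_Y(E)=\omega_X$, hence $a_i\geq-1$ for all $i$ and $X$ is log canonical. The hard part will be the second step, for two reasons. First, one must pin down precisely which homological consequence of $\mathcal{O}_X\simeq\underline{\Omega}^{\bullet}_X$ survives on the resolution; the naive guess $\mathcal{O}_X\simeq Rf_{*}\mathcal{O}_E$ is false (it already fails for the cone over an elliptic curve, a Du Bois non-rational singularity), so the correct input is the weaker descent statement relating $\underline{\Omega}^{\bullet}_X$, $Rf_{*}\mathcal{O}_Y$ and $Rf_{*}\mathcal{O}_E$, and one must verify it dualizes to a genuine surjection and not merely a map. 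Second, since $X$ need not be Cohen--Macaulay, $\omega_X^{\bullet}$ is a nontrivial complex and the bookkeeping of shifts must be done carefully, isolating $\mathcal{H}^{-n}=\omega_X$; this is where a relative Grauert--Riemenschneider type vanishing $R^{>0}f_{*}\omega_Y(E)=0$ would be invoked to identify the degree $-n$ cohomology of $Rf_{*}\omega_Y(E)[n]$ with $f_{*}\omega_Y(E)$.
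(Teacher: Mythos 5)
You should first be aware that the paper offers no proof of this statement: it is imported verbatim from \cite{kovacs1999rational} and used as a black box, so there is no internal argument to compare yours against. That said, your outline follows the standard modern proof (the one in Kov\'acs--Schwede--Smith and in \cite{graf2014potentially}, whose Theorem 1.4, quoted later in this paper, contains the present statement as the case $\Sigma=\emptyset$, $\Delta=0$). Your first and third paragraphs are essentially complete and correct: the reduction of ``log canonical'' to the single equality $f_*\omega_Y(E)=\omega_X$, the automatic inclusion $f_*\omega_Y(E)\subseteq\omega_X$ via agreement in codimension one and reflexivity of $\omega_X$, and the observation that $1\in\mathcal{J}$ detects $a_i\geq -1$ are all sound.

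The genuine gap is the middle step, which you yourself flag as ``the hard part'': it is precisely where the Du Bois hypothesis enters, and it is left as a plan rather than an argument. Two precise inputs are needed to close it. First, for $\Sigma=f(\mathrm{Exc}(f))$ with its reduced structure and $f$ a log resolution of the pair $(X,\Sigma)$ with $E=f^{-1}(\Sigma)_{\mathrm{red}}$, one has a natural isomorphism $\underline{\Omega}^\bullet_{\Sigma,X}\simeq Rf_*\mathcal{O}_Y(-E)$ (this is the ``standard descent statement'' you allude to; see \cite{graf2014potentially} or \cite[Chapter 6]{kollar2013sing}); composing $Rf_*\mathcal{O}_Y(-E)\simeq\underline{\Omega}^\bullet_{\Sigma,X}\to\underline{\Omega}^\bullet_X\simeq\mathcal{O}_X$ --- the last isomorphism being exactly the Du Bois hypothesis --- produces the morphism you want to dualize, and this is the only place the hypothesis is used. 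Second, after applying $R\mathcal{H}om_X(-,\omega_X^\bullet)$, using Grothendieck duality and local vanishing $R^{>0}f_*\omega_Y(E)=0$ to identify $h^{-n}$ of the target with $f_*\omega_Y(E)$, you do \emph{not} need the dualized map to be a surjection in any derived sense: you only need the induced map $\omega_X=h^{-n}(\omega_X^\bullet)\to f_*\omega_Y(E)$ to be injective, and this is automatic because that map is an isomorphism over the open set where $f$ is an isomorphism and $\omega_X$ is torsion-free on the irreducible normal $X$. With those two points supplied, your argument becomes the proof; as written, the step that actually uses the hypothesis is missing.
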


	Conversely,
	\begin{thm}[\cite{kollar2010log}, Theorem 1.4]
	Let $(X,\Delta)$ be a log canonical pair. Then $X$ is Du Bois.
	\end{thm}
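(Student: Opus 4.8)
The plan is to proceed by induction on $\dim X$, using the non-klt locus of the pair to descend to lower dimension and packaging the inductive step through the formalism of Du Bois pairs. The base case is the klt case: if $(X,\Delta)$ is klt then $X$ has rational singularities, and rational singularities are Du Bois (Kovács, Saito), so there is nothing more to prove. Thus I may assume that $(X,\Delta)$ is log canonical but not klt, and let $Z\subseteq X$ denote its non-klt locus, i.e. the union of all log canonical centers, endowed with its reduced structure.

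First I would record that $Z$ is Du Bois by the inductive hypothesis. Since $(X,\Delta)$ is lc, $Z$ is a proper closed subset with $\dim Z<\dim X$, it is seminormal, and by adjunction each lc center carries an induced log canonical pair structure of strictly smaller dimension; the theory of lc centers (the cited Theorem 5.14 together with Kollár's adjunction) then lets me assemble these into the statement that the reduced scheme $Z$ is Du Bois. This assembly is itself delicate — one must glue the Du Bois-ness of the normal minimal centers along the seminormal strata — so I would run the induction at the level of Du Bois \emph{pairs} $(W,\partial W)$ rather than bare varieties, which is exactly what keeps the adjunction compatible from one dimension to the next.

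Next I would prove that the pair $(X,Z)$ is a Du Bois pair, meaning that the natural map $\mathcal{I}_Z\to \underline{\Omega}^0_{(X,Z)}$ from the ideal sheaf to the relative zeroth Deligne--Du Bois complex is a quasi-isomorphism. Concretely, I would take a log resolution $f\colon Y\to X$ for which the total transform of $\Delta$ is snc, write $E=(f^{-1}Z)_{\mathrm{red}}$, and use the discrepancy inequalities $a(E_i;X,\Delta)\geq -1$ supplied by the lc hypothesis to show that $Rf_{*}$ of the ideal of $E$ computes $\mathcal{I}_Z$ with no higher cohomology. Combining this with the previous step through the distinguished triangle
\[
	\underline{\Omega}^0_{(X,Z)} \longrightarrow \underline{\Omega}^0_X \longrightarrow \underline{\Omega}^0_Z \xrightarrow{+1}
\]
and comparing it with the ideal-sheaf sequence $\mathcal{I}_Z\to \mathcal{O}_X\to \mathcal{O}_Z$ then forces $\mathcal{O}_X\to \underline{\Omega}^0_X$ to be a quasi-isomorphism, which is the desired conclusion.

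The hard part will be the vanishing underlying the Du Bois-pair step: showing $R^i f_{*}\mathcal{O}_Y(-E)=0$ for $i>0$ in the range dictated by the $(-1)$-discrepancy bound. This is the point at which the log canonical condition is unavoidable and where the Hodge-theoretic input enters, and I expect essentially the entire content of the theorem to be concentrated here. The natural tool is a Kollár-type injectivity/torsion-freeness theorem: the multiplicities in $E$ and in $f^{*}\Delta$ are calibrated precisely so that the obstruction classes land in the range where the splitting of the Du Bois complex of the snc pair $(Y,E)$, equivalently the relevant Kawamata--Viehweg/Kollár vanishing, applies. A secondary but genuine difficulty is the gluing and adjunction bookkeeping in the inductive step, since the non-klt locus is reducible and non-normal and one must keep the pair structures coherent across the strata.
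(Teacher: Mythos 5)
This statement is not proved in the paper at all: it is quoted verbatim from Koll\'ar--Kov\'acs \cite{kollar2010log} and used as a black box, so there is no internal proof to compare your attempt against. Judged on its own, your proposal correctly identifies the architecture of the actual proof in \cite{kollar2010log} --- induction on dimension, reduction to the non-klt locus $Z$, the formalism of Du Bois pairs, and a vanishing theorem as the Hodge-theoretic engine --- but as written it has two genuine gaps rather than being a proof.

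First, the inductive step ``$Z$ is Du Bois by the inductive hypothesis'' is not justified. The non-klt locus is reducible, non-normal, and is not in any direct sense the underlying space of a log canonical pair of smaller dimension: adjunction to a minimal lc center produces (after passing to a normalization) only a klt structure with a moduli part that need not be effective or even a genuine divisor, and non-minimal centers are worse. You acknowledge this is ``delicate,'' but the fix --- running the induction on a strictly stronger statement about semi-log-canonical or Du Bois pairs, with a semi-resolution replacing the log resolution --- is precisely where much of the work in \cite{kollar2010log} lives, and it is absent here. Second, the vanishing $R^if_*\mathcal{O}_Y(-E)=0$ (more precisely, the injectivity/torsion-freeness statement that makes $(X,Z)$ a Du Bois pair) is flagged as ``the hard part'' and ``where essentially the entire content of the theorem is concentrated,'' but no argument is given; asserting that Kawamata--Viehweg or Koll\'ar-type vanishing ``applies'' does not establish it, since the discrepancy $-1$ places one exactly at the boundary where those theorems fail without additional input. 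A proposal that defers the entire content of the theorem to two unproven steps is an outline of the literature, not a proof.
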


	We now state a few results that will be used in the next section. The first one is a deformation result:

\begin{thm}[\cite{ks2011bois}, Theorem 4.1]
Let $X$ be a scheme of finite type over $\mathbb{C}$ and $H$ a reduced Cartier divisor on $X$. If $H$ has Du Bois singularities, then $X$ has Du Bois singularities near $H$.
\end{thm}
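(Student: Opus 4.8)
The plan is to argue through the Deligne--Du Bois complex itself rather than through Schwede's resolution criterion, since the deformation statement is cleanest in terms of its zeroth piece. Recall that $X$ is Du Bois exactly when the natural map $\mathcal{O}_X \to \underline{\Omega}^0_X$ is a quasi-isomorphism (here $\underline{\Omega}^0_X$ is the zeroth associated object of $\underline{\Omega}^\bullet_X$, and $\mathcal{O}_X \to \underline{\Omega}^\bullet_X$ is a quasi-isomorphism iff $\mathcal{O}_X \to \underline{\Omega}^0_X$ is). Writing $\mathcal{J}_X$ for the cone of this map, so that there is a distinguished triangle $\mathcal{O}_X \to \underline{\Omega}^0_X \to \mathcal{J}_X \xrightarrow{+1}$, the Du Bois condition becomes $\mathcal{J}_X \simeq 0$. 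Since $\underline{\Omega}^0_X$ lies in $D^b_{coh}(X)$, the cohomology sheaves of $\mathcal{J}_X$ are coherent and are supported precisely on the non--Du Bois locus, so the theorem reduces to showing that these sheaves vanish in a neighborhood of $H$.

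The central input, and the step I expect to be the main obstacle, is a restriction (adjunction) triangle for the Du Bois complex along the reduced Cartier divisor $H$: I would establish a distinguished triangle $\underline{\Omega}^0_X \otimes \mathcal{O}_X(-H) \to \underline{\Omega}^0_X \to \underline{\Omega}^0_H \xrightarrow{+1}$ in which the middle map is the natural restriction coming from functoriality of $\underline{\Omega}^0$ applied to $H \hookrightarrow X$, with the twist by $\mathcal{O}_X(-H) = \mathcal{I}_H$ playing the role of the fiber. Constructing this triangle is the technical heart: it requires tracking the behavior of the Du Bois complex under passage to a Cartier divisor through compatible (hyper)resolutions of the pair $(X,H)$, and it is exactly where the Cartier and reducedness hypotheses on $H$ are genuinely used. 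This is essentially the content of the cited work of \cite{ks2011bois} and is the part one cannot shortcut.

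Granting the triangle, I would compare it with the ordinary restriction triangle $\mathcal{O}_X(-H) \to \mathcal{O}_X \to \mathcal{O}_H \xrightarrow{+1}$, which is valid because $H$ is Cartier so that $\mathcal{O}_X(-H)$ is an invertible ideal sheaf. The natural transformation $\mathcal{O}_\bullet \to \underline{\Omega}^0_\bullet$ makes these into a morphism of triangles, and taking cones vertically --- via the octahedral axiom or a $3\times 3$ diagram --- produces a distinguished triangle $\mathcal{J}_X \otimes \mathcal{O}_X(-H) \to \mathcal{J}_X \to \mathcal{J}_H \xrightarrow{+1}$, where I use that tensoring with the line bundle $\mathcal{O}_X(-H)$ is exact and commutes with the formation of cones.

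Finally I would feed in the hypothesis. Since $H$ is Du Bois, $\mathcal{J}_H \simeq 0$, so the first map becomes a quasi-isomorphism $\mathcal{J}_X \otimes \mathcal{O}_X(-H) \xrightarrow{\sim} \mathcal{J}_X$, induced on stalks by multiplication by a local equation $t$ of $H$. Passing to cohomology sheaves, multiplication by $t$ is an isomorphism on each coherent $\mathcal{H}^i(\mathcal{J}_X)$. At any point $p \in H$ we have $t \in \mathfrak{m}_p$, so surjectivity of multiplication by $t$ gives $\mathcal{H}^i(\mathcal{J}_X)_p = \mathfrak{m}_p \cdot \mathcal{H}^i(\mathcal{J}_X)_p$, whence $\mathcal{H}^i(\mathcal{J}_X)_p = 0$ by Nakayama's lemma. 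Thus $\mathcal{J}_X$ vanishes in a neighborhood of $H$, which is precisely the statement that $X$ is Du Bois near $H$.
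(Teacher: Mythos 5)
The paper does not actually prove this statement; it is imported verbatim as \cite[Theorem 4.1]{ks2011bois}, so there is no internal proof to compare against, and your proposal has to be judged on its own terms.

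Your overall skeleton --- pass to the defect object $\mathcal{J}_X = \mathrm{Cone}\bigl(\mathcal{O}_X \to \underline{\Omega}^0_X\bigr)$, show that multiplication by a local equation $t$ of $H$ acts surjectively on its coherent cohomology sheaves near $H$, and conclude by Nakayama plus closedness of support --- is the right one, and it is essentially how Kov\'acs and Schwede argue. The genuine gap is the distinguished triangle you place at the heart of the argument, $\underline{\Omega}^0_X \otimes \mathcal{O}_X(-H) \to \underline{\Omega}^0_X \to \underline{\Omega}^0_H \xrightarrow{+1}$: as a general statement about a reduced Cartier divisor, which is how you propose to establish it (from functoriality of compatible hyperresolutions, using only that $H$ is reduced and Cartier), it is false. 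Take $X = \mathbb{A}^2$ and $H = V(y^2 - x^3)$ the cuspidal cubic. Then $\underline{\Omega}^0_X = \mathcal{O}_X$, so the cone of your first map is $\mathcal{O}_H$; but $\mathcal{H}^0(\underline{\Omega}^0_H)$ is the structure sheaf of the seminormalization of $H$, namely of $\mathbb{A}^1$, which strictly contains $\mathcal{O}_H$ because the cusp is not seminormal. So the third vertex cannot be $\underline{\Omega}^0_H$. The triangle that functoriality actually gives is $\underline{\Omega}^0_{H,X} \to \underline{\Omega}^0_X \to \underline{\Omega}^0_H \xrightarrow{+1}$, with the pair object (as in Section \ref{DB}) in the first slot, and the real technical content of \cite{ks2011bois} is not an identification $\underline{\Omega}^0_{H,X} \simeq \underline{\Omega}^0_X \otimes \mathcal{O}_X(-H)$ but only a natural map $\underline{\Omega}^0_X \otimes \mathcal{O}_X(-H) \to \underline{\Omega}^0_{H,X}$ compatible with $\mathcal{I}_H \to \underline{\Omega}^0_{H,X}$. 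That weaker input still suffices: comparing the ideal-sheaf triangle with the pair triangle and using $\underline{\Omega}^0_H \simeq \mathcal{O}_H$ (this is where the Du Bois hypothesis on $H$ enters), one gets that multiplication by $t$ is surjective --- not necessarily bijective --- on each $\mathcal{H}^i(\mathcal{J}_X)$ near $H$, and surjectivity is all Nakayama needs. So your endgame survives, but the step you yourself flag as the technical heart would fail as stated and must be replaced by the map into the pair complex.
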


	A substantial advantage of Du Bois singularities over lc singularities is the notion of a Du Bois pair. Given any reduced subscheme $Z\subseteq X$, there's a natural map $\underline{\Omega}^\bullet_X \to \underline{\Omega}^\bullet_Z$, that can be completed to a distinguished triangle
	\[
	\underline{\Omega}^\bullet_{Z,X} \to\underline{\Omega}^\bullet_X \to \underline{\Omega}^\bullet_Z\stackrel{+}{\to}
	\]
	and one can prove that there is a natural map $\mathcal{I_Z}\to \underline{\Omega}^\bullet_{Z,X}$.
	
	\begin{defi}
	A pair $(X,Z)$ is a \emph{Du Bois pair} if $\mathcal{I_Z}\to \underline{\Omega}^\bullet_{Z,X}$ is a quasi-isomorphism.
	\end{defi}
	
	One would expect that if both $X$ and $Z$ are Du Bois then the pair $(X,Z)$ is, too. This is true, by the following very useful
	\begin{lm}[\cite{kollar2013sing}, Prop. 6.15]
	Let $(X,Z)$ be a reduced pair. If two of $\{ X,Z,(X,Z) \}$ are Du Bois, so is the third.
	\end{lm}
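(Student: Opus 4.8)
The plan is to exhibit a morphism between two distinguished triangles in $D_{coh}^\flat(X)$ whose vertical arrows are exactly the three comparison maps in question, and then to invoke the standard fact that, in a triangulated category, a morphism of distinguished triangles in which two of the three vertical maps are isomorphisms forces the third to be an isomorphism as well. Since being Du Bois (for $X$, for $Z$, or for the pair) is precisely the assertion that one of these comparison maps is a quasi-isomorphism, the statement follows once the two-out-of-three principle is applied.

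Concretely, I would start from the short exact sequence of sheaves
\[
0 \to \mathcal{I}_Z \to \mathcal{O}_X \to \mathcal{O}_Z \to 0,
\]
which produces the distinguished triangle $\mathcal{I}_Z \to \mathcal{O}_X \to \mathcal{O}_Z \stackrel{+}{\to}$. Alongside it sits the defining triangle of the Deligne--Du Bois complexes,
\[
\underline{\Omega}^\bullet_{Z,X} \to \underline{\Omega}^\bullet_X \to \underline{\Omega}^\bullet_Z \stackrel{+}{\to}.
\]
The natural maps $\mathcal{O}_X \to \underline{\Omega}^\bullet_X$, $\mathcal{O}_Z \to \underline{\Omega}^\bullet_Z$, and $\mathcal{I}_Z \to \underline{\Omega}^\bullet_{Z,X}$ are the candidate vertical arrows, and their quasi-isomorphy is, respectively, the Du Bois condition for $X$, for $Z$, and for $(X,Z)$.

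The key structural point is to verify that these three maps genuinely assemble into a morphism of the two triangles, i.e. that both resulting squares commute. The square built from $\mathcal{O}_X \to \mathcal{O}_Z$ and $\underline{\Omega}^\bullet_X \to \underline{\Omega}^\bullet_Z$ commutes by the functoriality of the assignment $W \mapsto \underline{\Omega}^\bullet_W$ with respect to the restriction to $Z$. The remaining square is handled by recalling how $\mathcal{I}_Z \to \underline{\Omega}^\bullet_{Z,X}$ is constructed: the relative complex $\underline{\Omega}^\bullet_{Z,X}$ is the shifted cone of $\underline{\Omega}^\bullet_X \to \underline{\Omega}^\bullet_Z$, so the commuting square on the right induces, via the functoriality of the cone (equivalently, via the octahedral axiom), a map on cones, and this induced map is exactly $\mathcal{I}_Z \to \underline{\Omega}^\bullet_{Z,X}$. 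In other words, the compatibility is built into the very definition of this map.

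Granting the diagram is a morphism of distinguished triangles, the conclusion is formal: taking cohomology sheaves $\mathcal{H}^i$ turns both triangles into long exact sequences, and the five lemma applied to the resulting ladder shows that if two of the vertical arrows are quasi-isomorphisms then the third is too. The main obstacle is therefore not this final diagram chase, which is routine, but rather pinning down the construction of $\mathcal{I}_Z \to \underline{\Omega}^\bullet_{Z,X}$ precisely enough to be certain that it is the cone map induced by the functoriality of the Deligne--Du Bois construction, and not some a priori unrelated morphism; once that identification is secured, the two-out-of-three statement drops out immediately.
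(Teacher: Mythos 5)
This lemma is quoted in the paper as a known result (Koll\'ar, \emph{Singularities of the minimal model program}, Prop.~6.15) and is not proved there, so there is no in-paper argument to compare against; your proposal is the standard proof from that reference. The argument is correct: the three comparison maps $\mathcal{O}_X \to \underline{\Omega}^\bullet_X$, $\mathcal{O}_Z \to \underline{\Omega}^\bullet_Z$, and $\mathcal{I}_Z \to \underline{\Omega}^\bullet_{Z,X}$ do form a morphism of distinguished triangles (the last map is by construction the induced map on the shifted cones, which is exactly the point you rightly single out as the one needing care), and the two-out-of-three conclusion then follows from the five lemma on the ladder of long exact sequences of cohomology sheaves.
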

	
	Another effective way to decide whether a pair is Du Bois or not, is the next
	
	\begin{lm}[\cite{kollar2013sing}, Excision lemma, Proposition 6.17\phantom{}]
Let $X = (Y\cup Z)_{red}$ be the union of closed reduced subschemes with $W = (Y\cap Z)_{red}$. Then $(X,Y)$ is a Du Bois pair iff $(Z,W)$ is a Du Bois pair.
\end{lm}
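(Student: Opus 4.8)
The plan is to reduce the equivalence to a single comparison between the two relative Deligne--Du Bois complexes. By definition $(X,Y)$ is a Du Bois pair exactly when the canonical map $\mathcal{I}_{Y\subseteq X}\to\underline{\Omega}^\bullet_{Y,X}$ is a quasi-isomorphism, where $\underline{\Omega}^\bullet_{Y,X}$ is the fibre of the restriction $\underline{\Omega}^\bullet_X\to\underline{\Omega}^\bullet_Y$; similarly $(Z,W)$ is a Du Bois pair exactly when $\mathcal{I}_{W\subseteq Z}\to\underline{\Omega}^\bullet_{W,Z}$ is a quasi-isomorphism, with $\underline{\Omega}^\bullet_{W,Z}$ the fibre of $\underline{\Omega}^\bullet_Z\to\underline{\Omega}^\bullet_W$. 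Hence it suffices to construct a commutative square
\[
\begin{CD}
\mathcal{I}_{Y\subseteq X} @>>> \underline{\Omega}^\bullet_{Y,X} \\
@VV{\cong}V @VV{\cong}V \\
\mathcal{I}_{W\subseteq Z} @>>> \underline{\Omega}^\bullet_{W,Z}
\end{CD}
\]
whose vertical arrows are isomorphisms in the derived category, since then the top row is a quasi-isomorphism if and only if the bottom row is.

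The essential input is the Mayer--Vietoris property of the Du Bois complex. Because $\underline{\Omega}^\bullet_{(-)}$ depends only on the reduced structure and may be computed from a hyperresolution subordinate to a closed cover, the four restriction maps assemble into a distinguished triangle
\[
\underline{\Omega}^\bullet_X\to\underline{\Omega}^\bullet_Y\oplus\underline{\Omega}^\bullet_Z\to\underline{\Omega}^\bullet_W\stackrel{+}{\to}.
\]
This is precisely the assertion that the square with corners $\underline{\Omega}^\bullet_X,\underline{\Omega}^\bullet_Y,\underline{\Omega}^\bullet_Z,\underline{\Omega}^\bullet_W$ and the restriction maps is homotopy cartesian. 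In such a square the fibres of the two horizontal arrows are canonically identified; here those fibres are $\underline{\Omega}^\bullet_{Y,X}$ (fibre of $\underline{\Omega}^\bullet_X\to\underline{\Omega}^\bullet_Y$) and $\underline{\Omega}^\bullet_{W,Z}$ (fibre of $\underline{\Omega}^\bullet_Z\to\underline{\Omega}^\bullet_W$), which gives the right-hand vertical isomorphism.

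Running the same argument one step down, with $\mathcal{O}$ in place of $\underline{\Omega}^\bullet$, produces the left-hand isomorphism: the short exact sequence $0\to\mathcal{O}_X\to\mathcal{O}_Y\oplus\mathcal{O}_Z\to\mathcal{O}_W\to 0$ makes the corresponding square of structure sheaves bicartesian, so the horizontal kernels agree, i.e.\ $\mathcal{I}_{Y\subseteq X}=\ker(\mathcal{O}_X\to\mathcal{O}_Y)\cong\ker(\mathcal{O}_Z\to\mathcal{O}_W)=\mathcal{I}_{W\subseteq Z}$; concretely, a function on $X$ vanishing on $Y$ restricts to a function on $Z$ vanishing on $W$, and conversely such a function extends by zero across $Y$. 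Since both horizontal maps of the square are the canonical morphism $\mathcal{O}_{(-)}\to\underline{\Omega}^\bullet_{(-)}$ restricted to horizontal fibres, the two excision isomorphisms---both arising from the identification of horizontal fibres in a bicartesian square---are compatible, so the square commutes.

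I expect the main obstacle to lie in justifying the Mayer--Vietoris triangle for Du Bois complexes together with the compatibility invoked in the last step: one has to verify that the identification of horizontal fibres furnished by the homotopy cartesian square of Du Bois complexes is induced by the very map $\mathcal{O}_{(-)}\to\underline{\Omega}^\bullet_{(-)}$ used on the structure-sheaf side, which amounts to tracking the functoriality of $\mathcal{O}\to\underline{\Omega}^\bullet$ through the hyperresolution. A secondary point is the reduced-versus-scheme-theoretic discrepancy: the structure-sheaf Mayer--Vietoris sequence is literally exact for the scheme-theoretic union and intersection, so one should record that $\underline{\Omega}^\bullet$ is insensitive to nilpotents and that the ideal-sheaf excision may be checked directly on sections as above, so that replacing $Y\cap Z$ by its reduction $W$ causes no trouble.
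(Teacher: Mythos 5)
The paper does not actually prove this lemma---it imports it verbatim from Koll\'ar's book---so your proposal is measured against the standard argument there, and your overall strategy is the right one: reduce to identifying the fibres of the two horizontal arrows via the Mayer--Vietoris triangle for $\underline{\Omega}^\bullet$ on one side and an ideal-sheaf computation on the other. The Mayer--Vietoris triangle itself, which you single out as the main obstacle, is not the problem: it follows from cohomological descent for the proper surjection $Y\sqcup Z\to X$ together with the insensitivity of $\underline{\Omega}^\bullet$ to nilpotents, and it does yield $\underline{\Omega}^\bullet_{Y,X}\simeq\underline{\Omega}^\bullet_{W,Z}$ with $W$ the \emph{reduced} intersection.

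The genuine gap is in the step you relegate to ``a secondary point.'' The restriction map does identify $\mathcal{I}_{Y\subseteq X}=I_Y/(I_Y\cap I_Z)$ with $(I_Y+I_Z)/I_Z$, i.e.\ with the ideal in $\mathcal{O}_Z$ of the \emph{scheme-theoretic} intersection $Y\cap Z$; your ``extend by zero'' argument requires the function on $Z$ to vanish on $Y\cap Z$ with its scheme structure, because $\mathcal{O}_X=\mathcal{O}_Y\times_{\mathcal{O}_{Y\cap Z}}\mathcal{O}_Z$ is a fibre product over the scheme-theoretic intersection, not over $W$. When $Y\cap Z$ is non-reduced this is strictly smaller than $\mathcal{I}_{W\subseteq Z}$, so your left vertical arrow is injective but not surjective and the ``top q.i.\ iff bottom q.i.'' conclusion does not follow. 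Concretely, take $X=V\bigl(y(y-x^2)\bigr)\subset\A^2$, $Y=V(y)$, $Z=V(y-x^2)$, so $W$ is the reduced origin: then $\mathcal{I}_{Y\subseteq X}$ restricts onto $x^2k[x]\subsetneq xk[x]=\mathcal{I}_{W\subseteq Z}$ inside $\mathcal{O}_Z\cong k[x]$, while $\underline{\Omega}^0_{Y,X}\simeq\underline{\Omega}^0_{W,Z}\simeq xk[x]$; here $(Z,W)$ is a Du Bois pair but $\mathcal{I}_{Y\subseteq X}\to\underline{\Omega}^0_{Y,X}$ has a nonzero cokernel. So the argument (and indeed the statement, as transcribed with $W=(Y\cap Z)_{\mathrm{red}}$) needs the additional input that the scheme-theoretic intersection $Y\cap Z$ is reduced---equivalently, that the cokernel $\sqrt{I_Y+I_Z}/(I_Y+I_Z)$ vanishes. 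You should either add that hypothesis or phrase the pair on the right-hand side in terms of the scheme-theoretic intersection. Note that in the only place the paper uses this lemma (the proof of Lemma \ref{excision}) all pairwise intersections of the $B_i$ are assumed reduced, so the application is unaffected; but as a proof of the lemma as stated your argument has a hole exactly where you declared there was none.
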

	
	One might wonder about the relationship between being a Du Bois pair and being an lc pair. The following theorem provides an answer. 

	\begin{thm}[\cite{graf2014potentially}, Theorem 1.4]
	Let $X$ be a nomal complex variety. Let $(X,\Sigma)$ be a Du Bois pair and $\Delta$ a reduced effective divisor on $X$ such that $	\textrm{supp}\Delta\subseteq \Sigma$ and $K_X+\Delta$ is Cartier. Then $(X,\Delta)$ is a log canonical pair.
	\end{thm}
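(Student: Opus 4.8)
The strategy is to adapt Kov\'acs' argument for the absolute case (the cited theorem that a normal Du Bois variety with $K_X$ Cartier is log canonical, which is exactly the instance $\Sigma=\emptyset$, $\Delta=0$) to the relative setting of a Du Bois pair, leaning on the hypothesis that $K_X+\Delta$ is Cartier. Since log canonicity is a local property that can be tested on a single log resolution, it suffices to bound the discrepancy $a(E;X,\Delta)$ of every prime divisor $E$ over $X$. Because $K_X+\Delta$ is Cartier these discrepancies are integers, so the pair fails to be log canonical precisely when some discrepancy is $\leq -2$, and the plan is to show that this cannot occur.

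First I would dispose of the locus away from $\Sigma$. On the open set $U = X\setminus\Sigma$ the ideal $\mathcal{I}_\Sigma$ restricts to $\mathcal{O}_U$ and the relative complex $\underline{\Omega}^\bullet_{\Sigma,X}$ restricts to the absolute complex $\underline{\Omega}^\bullet_U$, so the Du Bois pair isomorphism $\mathcal{I}_\Sigma\simeq\underline{\Omega}^\bullet_{\Sigma,X}$ degenerates over $U$ to $\mathcal{O}_U\simeq\underline{\Omega}^\bullet_U$; that is, $X$ has Du Bois singularities along $U$. Since $\mathrm{supp}\,\Delta\subseteq\Sigma$, the boundary is trivial on $U$ and $K_X$ is Cartier there, so the cited theorem of Kov\'acs applies and $(U,0)$ is log canonical. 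It remains to treat a neighborhood of $\Sigma$.

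Near $\Sigma$ I would fix a log resolution $\pi\colon Y\to X$ of $(X,\Sigma)$, an isomorphism over the snc locus, and set $F=\pi^{-1}(\Sigma)_{\mathrm{red}}$, a reduced simple normal crossings divisor. Writing $K_Y=\pi^*(K_X+\Delta)+\sum_i a_i E_i$ over the prime divisors $E_i$ occurring in $F$, log canonicity is exactly the effectivity of the integral divisor $\sum_i(a_i+1)E_i$. The Du Bois pair hypothesis, through the resolution description of $\underline{\Omega}^\bullet_{\Sigma,X}$ in the spirit of Schwede's criterion, is equivalent to the quasi-isomorphism
\[
\mathcal{I}_\Sigma \xrightarrow{\ \sim\ } R\pi_*\mathcal{O}_Y(-F),
\]
the relative analogue of the splitting $\mathcal{O}_X\simeq R\pi_*\mathcal{O}_E$ used in the absolute case. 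Dualizing by Grothendieck duality for $\pi$ and invoking the logarithmic Grauert--Riemenschneider vanishing $R^{>0}\pi_*\omega_Y(F)=0$, one converts it into an identification of $\pi_*\omega_Y(F)$ with $R\mathcal{H}om_X(\mathcal{I}_\Sigma,\omega_X^\bullet)$, a statement that controls precisely the coefficients $a_i+1$.

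The main obstacle is this final extraction: turning the abstract quasi-isomorphism into the pointwise bound $a_i\geq -1$. Concretely, if some $a_0\leq -2$, the negative coefficient of $E_0$ in $\sum_i(a_i+1)E_i$ should produce a class — a section of a suitable quotient, or an obstruction in a local cohomology group — incompatible with the duality isomorphism above, exactly as a discrepancy $\leq -2$ obstructs the Du Bois splitting in Kov\'acs' original argument. Here the Cartier hypothesis on $K_X+\Delta$ is used twice: it forces integrality of the $a_i$, so that failure of log canonicity is already detectable at the level of $\mathcal{O}_Y(-F)$ rather than through some fractional twist, and it lets one commute $K_X+\Delta$ past $\pi^*$ by the projection formula. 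Carrying out this contradiction faithfully — and reconciling the fact that $F=\pi^{-1}(\Sigma)$ may be strictly larger than the reduced transform associated to $(X,\Delta)$, so that the bound obtained for the larger $\Sigma$ still yields log canonicity for the smaller boundary $\Delta$ — is the delicate part that carries the genuine content of the theorem.
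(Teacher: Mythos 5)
The paper itself gives no proof of this statement: it is imported verbatim as Theorem~1.4 of Graf--Kov\'acs \cite{graf2014potentially}, so the only meaningful comparison is with the proof in that source. Your roadmap is in fact the one Graf and Kov\'acs follow: the reduction over $U = X\setminus\Sigma$ (a Du Bois pair restricts to Du Bois singularities on the complement of $\Sigma$, where $\Delta|_U = 0$ and $K_X$ is Cartier, so Kov\'acs' absolute theorem applies), the identification of the Du Bois pair condition with the quasi-isomorphism $\mathcal{I}_\Sigma \simeq R\pi_*\mathcal{O}_Y(-F)$ on a log resolution (strictly, this is a statement about the zeroth graded piece $\underline{\Omega}^0_{X,\Sigma}$, and the identification $\underline{\Omega}^0_{X,\Sigma}\simeq R\pi_*\mathcal{O}_Y(-F)$ is itself a lemma you are quoting, not proving), and then Grothendieck duality combined with a logarithmic Grauert--Riemenschneider vanishing. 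So the skeleton is correct and matches the source.

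The genuine gap is that you stop exactly where the content of the theorem lies, and you say so yourself: ``carrying out this contradiction faithfully \dots\ is the delicate part.'' Concretely, three things are missing. First, the vanishing $R^{>0}\pi_*\omega_Y(F)=0$ is invoked but not justified (it follows from relative Kawamata--Viehweg applied to the snc divisor $F$); with it, duality turns the quasi-isomorphism into an isomorphism $\pi_*\omega_Y(F)\cong \operatorname{Hom}_X(\mathcal{I}_\Sigma,\omega_X)$ on the lowest cohomology sheaf. Second, one needs the inclusion $\omega_X(\Delta)\subseteq \operatorname{Hom}_X(\mathcal{I}_\Sigma,\omega_X)$, which is where normality of $X$, reducedness of $\Delta$, and $\operatorname{supp}\Delta\subseteq\Sigma$ actually enter; none of this appears in your sketch. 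Third, the final extraction is not the mysterious obstruction-class argument you gesture at: since $K_X+\Delta$ is Cartier, $\omega_X(\Delta)$ is locally free, a local generator $s$ lies in $\pi_*\omega_Y(F)$ by the isomorphism above, i.e.\ $\pi^*s$ is locally a section of $\omega_Y(F)$, and comparing divisors of sections gives $K_Y+F-\pi^*(K_X+\Delta)\geq 0$, hence $a_i\geq -1$ for every divisor appearing in $F$ --- a direct computation. Note also that your worry about $F$ being larger than the transform of $\Delta$ points in the harmless direction (a larger $F$ only proves the bound for more divisors); the divisors $F$ genuinely misses are those lying over $X\setminus\Sigma$, and these are exactly the ones your reduction to $U$ already handles. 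As written, then, the proposal is an accurate outline of the published proof but asserts rather than establishes its decisive step, so it does not yet constitute a proof.
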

	
	The theorems above will be used extensively in the next section. Meanwhile, as an example of an immediate application, one can see that a normal lc center $Z$ of $X$ such that $K_Z$ is Cartier has lc singularities, as any union of lc centers is Du Bois (\cite[Thm. 5.14]{kollar2013sing}).

\section{A global lc condition for special lc complete intersections}

	The aim of this section is to show that under certain hypotheses one can prove the existence of a \emph{log canonical} pair $(X, \Delta)$ such that $Z$ is a log canonical center for $(X, \Delta)$.
	
	The precise statement is the following:
\begin{thm}\label{lcpair}
	Let $X$ be a normal variety with lc singularities such that $K_X$ is Cartier, and let $W$ be a reduced irreducible special complete intersection in $X$ such that $W$ has lc singularities and it is not contained in $\Sing X$. Then there exist a boundary $\Delta$ such that the pair $(X, \Delta)$ has log canonical singularities and $W$ is a lc center for $(X, \Delta)$.
\end{thm}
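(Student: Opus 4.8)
The plan is to reuse the construction of Theorem \ref{ci} and then upgrade its conclusion from ``$W$ is an lc center'' to ``$(X,\Delta)$ is lc with $W$ an lc center''. Since $X$ is normal with $K_X$ Cartier and $W\not\subseteq\Sing X$, Theorem \ref{ci} applies: writing $W$ as the special complete intersection cut out by forms $F_1,\dots,F_r$ of a common degree $d$ and setting $\Delta=D_1+\dots+D_r$ with $D_i=V(F_i)$, the exceptional divisor over $W$ has discrepancy $-1$, so condition $(2)$ of Definition \ref{lcdefi} is satisfied. Because all the $F_i$ share the same degree, I may replace them by general $k$-linear combinations without changing $W$; by Bertini each $D_i$ is then reduced, distinct $D_i$ share no component, and by Lemma \ref{sncdelta} the pair is snc away from $\Sing X\cup\Sing W$. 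The only thing left to prove is therefore that $(X,\Delta)$ is \emph{globally} log canonical.

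For this I would invoke the theorem of Graf--Kov\'acs with $\Sigma:=\operatorname{supp}\Delta=\bigcup_i D_i$. Since $K_X$ is Cartier and each $D_i\in|\mathcal{O}_X(d)|$ is Cartier, $K_X+\Delta$ is Cartier, and $\Delta$ is a reduced effective divisor with support $\Sigma$; hence it suffices to show that $(X,\Sigma)$ is a Du Bois pair. As $X$ is log canonical with $K_X$ Cartier it is Du Bois, so by the ``two out of three'' lemma this reduces to proving that the reduced divisor $\Sigma$ is Du Bois.

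The heart of the argument is thus the claim that $\Sigma=\bigcup_i D_i$ has Du Bois singularities, which I would establish by a nested induction on the number of divisors, after a preliminary step: every partial intersection $W_S=\bigcap_{i\in S}D_i$ (a general complete intersection containing $W$) is Du Bois. Away from $W$ this follows from a Bertini-type preservation of Du Bois singularities for general members of the linear system spanned by the $F_i$, together with classical Bertini on the smooth locus of $X$; near $W$ it follows from the deformation theorem of Kov\'acs--Schwede applied up the flag $X\supset D_1\supset D_1\cap D_2\supset\cdots\supset D_1\cap\cdots\cap D_r=W$, by descending induction on codimension: each $W_S$ is a reduced Cartier divisor in $W_{S\setminus\{j\}}$, and $W$ is Du Bois because $W$ is lc, so Du Bois-ness near $W$ propagates upward. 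In particular each single $D_i$ is Du Bois. With this in hand I would prove $\Sigma_m=D_1\cup\cdots\cup D_m$ Du Bois by induction on $m$: the excision lemma applied to $\Sigma_m=\Sigma_{m-1}\cup D_m$ identifies $(\Sigma_m,\Sigma_{m-1})$ with the pair $\bigl(D_m,\bigcup_{i<m}(D_i\cap D_m)\bigr)$, whose second entry is a union of $m-1$ general divisors inside the Du Bois variety $D_m$ with common intersection $W$; the inductive hypothesis applied inside $D_m$ makes this union Du Bois, two out of three makes the pair $\bigl(D_m,\,\cdot\,\bigr)$ and hence $(\Sigma_m,\Sigma_{m-1})$ Du Bois pairs, and two out of three once more (with $\Sigma_{m-1}$ Du Bois) yields $\Sigma_m$ Du Bois.

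The main obstacle I expect is precisely the Du Bois-ness of $\Sigma$ along the bad locus $\Sing X\cup\Sing W$, where the transversality of Lemma \ref{sncdelta} breaks down. This is what forces the combined use of the deformation theorem near $W$ and a Bertini-type preservation of Du Bois singularities off $W$ (the deformation theorem alone only moves information from a divisor to the ambient space, i.e.\ in the wrong direction to conclude that a general member is Du Bois). It is also the step where the \emph{special} hypothesis is essential: the equality of degrees is exactly what lets me treat all the $D_i$ as general members of a single linear system, so that the genericity and the nested induction go through uniformly.
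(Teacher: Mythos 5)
Your proposal follows essentially the same route as the paper: replace the $F_i$ by general linear combinations of a single linear system (Lemma \ref{genericgens}, where the \emph{special} hypothesis enters), show all partial intersections are Du Bois by combining Bertini away from $W$ with the Kov\'acs--Schwede deformation theorem and descending induction starting from the lc (hence Du Bois) variety $W$ (Lemma \ref{DB gens}), assemble the union $\bigcup D_i$ via the excision-lemma induction (Lemma \ref{excision}), and conclude with Graf--Kov\'acs plus Theorem \ref{ci}. The only divergence is at the locus $\Sing X\setminus W$: where you would invoke an (unstated and unproven) Bertini-type preservation of Du Bois singularities, the paper instead asserts via Bertini that each partial intersection is smooth away from $\Sing W$, so your version is essentially a more cautious reading of that same step rather than a different argument.
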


	Inspired by the proof of Proposition \ref{ci}, the idea is to produce new generators for the ideal of $I_W$, then use Bertini's theorem to control the singularities of their zero sets and finally to rely on results from the theory of Du Bois pairs to draw conclusions on the pair $(X, \Delta)$.
	
	We will break down the proof in a few technical lemmas. For the rest of the section, we will employ the following
	
	\begin{setup}\label{setup}
	The variety $W$ will be a reduced complete intersection in $X$ given by the ideal $I_W = (F_1,\ldots,F_r)\subseteq S(X)$, where $S(X)$ is the coordinate ring of $X$ in $\P^n$.	
	\end{setup}
	
	Since we'll be using partial complete intersections in inductive arguments, we start by stating a lemma about reducedness:
\begin{lm}\label{veryreduced}
	Using the notation of Setup \ref{setup}, for any $s \leq r$ the partial complete intersection $(F_1,\ldots,F_s)$ is reduced.
\end{lm}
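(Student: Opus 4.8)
The plan is to verify Serre's criterion---reducedness $\Leftrightarrow R_0 + S_1$---for the subscheme $V(F_1,\ldots,F_s)\subseteq X$ for each $s\le r$, working locally on the affine charts of $X$. Because $W$ is a \emph{special} complete intersection, all the $F_i$ share a common degree $d$, so $\langle F_1,\ldots,F_r\rangle$ is a linear system of degree-$d$ forms whose base locus is exactly $W$. I would exploit this by replacing the $F_i$ with general $k$-linear combinations of themselves: a general invertible change of basis on this $r$-dimensional space generates the same ideal $I_W$ (so it still cuts out $W$ and is still a regular sequence), while making each $F_i$ a \emph{general} member of the system, which is what the Bertini input below requires.

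For $S_1$: since $W$ is a complete intersection of codimension $r$, the sequence $F_1,\ldots,F_r$ is regular, hence so is each initial segment $F_1,\ldots,F_s$. On the affine charts of $X$, whose local rings are Cohen--Macaulay (this is where the Gorenstein hypothesis on $X$ enters), a regular sequence has Cohen--Macaulay quotient, so $V(F_1,\ldots,F_s)$ is Cohen--Macaulay; in particular it is $S_1$ and equidimensional of codimension $s$, with no embedded primes.

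For $R_0$: here I would run the same Bertini argument as in Lemma \ref{ciinideal}. In characteristic zero, a general member of the system $\langle F_1,\ldots,F_r\rangle$, which is base point free away from $W$, is smooth on $X\setminus(\Sing X\cup W)$; iterating, $V(F_1,\ldots,F_s)$ is smooth there too. It then remains to see that no irreducible component of $V(F_1,\ldots,F_s)$ is contained in the bad locus $\Sing X\cup W$. Each component has codimension exactly $s$ by the equidimensionality above, so it cannot lie in $W$ (which has codimension $r>s$ when $s<r$); and a dimension count---cutting $\Sing X$ repeatedly with general members of a system that is base point free off $W$---keeps $\dim\bigl(V(F_1,\ldots,F_s)\cap\Sing X\bigr)$ strictly below $\dim V(F_1,\ldots,F_s)$. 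Thus every component is generically smooth, giving $R_0$, and Serre's criterion yields reducedness; the case $s=r$ is consistent with the reducedness of $W$ assumed in the Setup.

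The hard part will be this last, global, $R_0$ statement: ensuring that no entire component of a partial intersection hides inside $\Sing X$ (the containment in $W$ is ruled out cheaply by codimension). This is exactly where the genericity afforded by the special, equal-degree hypothesis is essential, since it is what makes the Bertini input applicable to the generators themselves; the remaining work is the routine but delicate dimension bookkeeping for the repeated general hyperplane-type sections.
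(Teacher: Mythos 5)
Your proposal establishes a different, weaker statement than the lemma. The very first move --- replacing $F_1,\ldots,F_r$ by general linear combinations $G_i$ --- is not harmless here: while $(G_1,\ldots,G_r)=(F_1,\ldots,F_r)=I_W$, the \emph{partial} ideals $(G_1,\ldots,G_s)$ and $(F_1,\ldots,F_s)$ are genuinely different for $s<r$, so reducedness of $V(G_1,\ldots,G_s)$ says nothing about reducedness of $V(F_1,\ldots,F_s)$, which is what the lemma asserts for the generators fixed in Setup \ref{setup}. If you drop the replacement, your Bertini input for $R_0$ disappears: a specific generator $F_i$ need not be a general member of the linear system, and nothing in your argument then rules out, say, $V(F_1)$ being generically non-reduced. (Your $S_1$ step also quietly imports a Cohen--Macaulay hypothesis on $X$ that is not part of the Setup; ``normal with lc singularities'' does not supply it.) The weaker statement you prove would in fact suffice for the paper's downstream applications, since Lemma \ref{genericgens} only ever invokes reducedness for the generic generators it constructs, but it is not a proof of the lemma as stated.

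The paper's proof is entirely different and needs none of this machinery: it is a direct algebraic descent on degree. If $G\notin J=(F_1,\ldots,F_{r-1})$ is homogeneous of minimal degree with $G^q\in J$, then radicality of $I_W$ gives $G\in I_W$, so $G=H+MF_r$ with $H\in J$ and $\deg M<\deg G$; hence $M^qF_r^q\equiv G^q\equiv 0 \bmod J$, and since $F_r$ is a nonzerodivisor modulo $J$ (regular sequence) one gets $M^q\in J$, whence $M\in J$ by minimality of $\deg G$, forcing $G\in J$, a contradiction; iterating downward handles all $s$. This works for arbitrary generators, uses neither the equal-degree hypothesis nor Bertini nor any genericity or Cohen--Macaulay assumption --- which is exactly why the lemma can be stated for the given $F_i$'s rather than only for generic ones. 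To repair your argument you would either have to restrict the lemma to generic generators (and adjust its uses), or supply a separate reason that the specific partial intersections $V(F_1,\ldots,F_s)$ are generically reduced, which is precisely the nontrivial content the paper's algebraic argument delivers.
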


\begin{proof}
It's enough to show that $J = (F_1,\ldots,F_{r-1})$ is reduced. Assume towards contradiction that there is a homogeneous element $G\in S(X)$ such that $G\notin J$ but $G^q\in J$ for some $q>0$. Among all such $G$'s, pick one with the minimal degree. By hypothesis, $G^q\in J\subset I$ so $G\in I$. We can write
	\[
	G = H +M F_r
	\]
	where $H$ and $M$ are homogeneous elements with $H\in J$ and $\deg M <\deg G$. Then
	\[
	0 \equiv G^q \equiv M^q F_r^q \mod J
	\]
	Since the $F_i$'s form a regular sequence, $F_r^r$ is not a zerodivisor modulo $J$, hence $M^q\in J$. By minimality of the degree of $G$, $M$ has to be an element of $J$, and this forces $G\in J$, contradiction.
\end{proof}

	Many proofs rely on the fact that $I_W$ is generated by a regular sequence. Since we want to consider a (possibly) different set of generators, we have to make sure they still form a regular sequence: this turns out to be the case.

\begin{lm}\label{genreg}
	Using the notation of Setup \ref{setup}, every generating set $G_1,\ldots, G_r$ for $I_W$ consisting of $r$ elements is a regular sequence.
\end{lm}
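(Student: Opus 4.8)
The plan is to show that any two $r$-element generating sets of the homogeneous ideal $I_W$ differ by an invertible matrix over the relevant local ring, and that the regular-sequence property is transported across such a change of generators by means of the Koszul complex. Throughout I write $R = S(X)$ and take the $G_j$ to be homogeneous, as is forced by the graded setting and by the later applications.

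First I would record that $r$ is the minimal number of generators of $I_W$. Since the $F_i$ form a regular sequence, $\operatorname{ht} I_W = r$; by Krull's height theorem any generating set of $I_W$ has at least $r$ elements, so $F_1,\ldots,F_r$ and $G_1,\ldots,G_r$ are both minimal. Working in the localization $R_{\mathfrak m}$ at the graded maximal ideal $\mathfrak m = R_+$ (which contains $I_W$, as $I_W$ is a proper homogeneous ideal), the images $\overline F_i$ and $\overline G_j$ are two bases of the $r$-dimensional $k(\mathfrak m)$-vector space $I_W R_{\mathfrak m}/\mathfrak m I_W R_{\mathfrak m}$. Writing $G_j = \sum_i a_{ji} F_i$ with $a_{ji}\in R$, the matrix $A = (a_{ji})$ reduces modulo $\mathfrak m$ to a change-of-basis matrix, so $\det A$ is a unit in $R_{\mathfrak m}$ and $A \in \mathrm{GL}_r(R_{\mathfrak m})$.

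Next I would transport the regular-sequence property. The $F_i$ remain a regular sequence after localizing at $\mathfrak m$ (localization is exact and $I_W R_{\mathfrak m}\neq R_{\mathfrak m}$), so the Koszul complex $K_\bullet(\underline F;R_{\mathfrak m})$ resolves $R_{\mathfrak m}/I_W R_{\mathfrak m}$ and has vanishing higher homology. An invertible change of generators $\underline G = A\underline F$ induces, via $\wedge^\bullet A$, an isomorphism of Koszul complexes $K_\bullet(\underline F)\xrightarrow{\sim}K_\bullet(\underline G)$, whence $H_i(K_\bullet(\underline G;R_{\mathfrak m})) = 0$ for $i\geq 1$, and in particular $H_1 = 0$. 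Since $R_{\mathfrak m}$ is Noetherian local and the $G_j$ lie in $\mathfrak m R_{\mathfrak m}$, the standard Koszul criterion for depth gives that $G_1,\ldots,G_r$ is a regular sequence in $R_{\mathfrak m}$. Finally, because the $G_j$ are homogeneous, a homogeneous sequence in $R_+$ is $R$-regular iff it is $R_{\mathfrak m}$-regular (the associated primes of a graded module are homogeneous, hence contained in $\mathfrak m$), which promotes the conclusion back to $R = S(X)$.

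The main obstacle is the passage between the local and global statements: the clean characterization of regular sequences through Koszul homology lives in the local ring, while the lemma asserts a global regular sequence. This is why I localize at the irrelevant ideal $\mathfrak m = R_+$ and lean on homogeneity to globalize, rather than at the prime $I_W$, where the local statement would be immediate (the ring $R_{I_W}$ is even regular, its maximal ideal being generated by the length-$r$ regular sequence $\underline F$) but the graded structure needed for globalization is lost. I would emphasize that no Cohen–Macaulay hypothesis on $S(X)$ is required, since the argument uses only the invertible change of generators and never equates height with depth.
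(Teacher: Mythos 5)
Your argument is correct, but it takes a genuinely different route from the paper's. The paper proves the lemma by a dimension count: if $G_s$ were a zerodivisor modulo $(G_1,\ldots,G_{s-1})$, it would lie in a minimal prime of that ideal, so $V(G_s)$ would contain an irreducible component of $V(G_1,\ldots,G_{s-1})$ and intersecting with it would not drop the dimension; iterating, $\dim V(I_W) > \dim X - r$, a contradiction. That proof is shorter and applies verbatim to non-homogeneous generators, but it quietly replaces ``associated prime'' by ``minimal prime,'' i.e.\ it uses that $S(X)/(G_1,\ldots,G_{s-1})$ has no embedded primes --- an unmixedness statement that is automatic when $S(X)$ is Cohen--Macaulay but not in general. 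Your route (Krull's height theorem plus Nakayama to see that the two generating sets differ by a matrix in $\mathrm{GL}_r$ of the local ring at the irrelevant ideal, then the equivariance and depth-sensitivity of the Koszul complex) sidesteps that issue entirely, as you note, at the cost of working locally and then invoking homogeneity --- of the $G_j$ and of the associated primes of a graded module --- to globalize. Consequently your proof covers only homogeneous generating sets, which is nominally narrower than the statement but is all the paper ever uses: in Lemma \ref{genericgens} the $G_i$ are $k$-linear combinations of the $F_j$, which all have the same degree. In short, the paper's proof is more elementary and more general in the shape of the generators it allows; yours is heavier but more robust, since no Cohen--Macaulay hypothesis on $S(X)$ is in force in this section.
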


\begin{proof}
	Let $J = (G_1,\ldots,G_{s-1})$ and assume towards contradiction that $G_s$ is a zerodivisor modulo $J$. Then $G_s$ is contained in a minimal prime of $J$, which are the same as the minimal primes of $\sqrt{J}$. This implies that $V(G_s)$ contains an irreducible component of $V(J)$, so that $\dim V(J+(G_s)) = \dim V(J)$. Since intersecting with a hypersurface makes the dimension go down by at most one, it would follow that $\dim V(I_{W}) > n-r$, contradiction.
\end{proof}

	Before proving the main lemma, we need a way to control the singularities of a complete intersection. 
\begin{lm} \label{regularcuts}
	Let $(R,\m)$ be a local Noetherian ring, $f\in \m$ a non zero-divisor and assume that $(R/f, \m/f)$ is regular. Then $(R,\m)$ is a regular ring.
\end{lm}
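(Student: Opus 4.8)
The plan is to use the numerical characterization of regular local rings: a Noetherian local ring $(R,\m)$ with residue field $k = R/\m$ is regular if and only if its embedding dimension $\dim_k \m/\m^2$ equals its Krull dimension $\dim R$, while in general one always has the inequality $\dim_k \m/\m^2 \geq \dim R$ (a consequence of Krull's height theorem, since $\m$ cannot be generated by fewer than $\dim R$ elements). Writing $d = \dim R$, the strategy is to sandwich the embedding dimension of $R$ between $d$ and $d$, and conclude equality.

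First I would record the dimension drop coming from the non-zerodivisor hypothesis. Since $f$ is a non-zerodivisor, it lies in no associated prime of $R$, and in particular in no minimal prime; standard dimension theory then gives $\dim R/(f) = d - 1$ (the inequality $\geq d-1$ is Krull's principal ideal theorem, and the reverse follows by lifting a maximal chain in $R/(f)$ and extending it downward past a minimal prime avoided by $f$). Because $\bar R := R/(f)$ is regular by hypothesis and has the same residue field $k$, its embedding dimension equals its Krull dimension, so $\dim_k \bar\m/\bar\m^2 = d-1$, where $\bar\m = \m/(f)$. Next I would compare the two cotangent spaces: the surjection $R \twoheadrightarrow \bar R$ induces a surjection $\m/\m^2 \twoheadrightarrow \bar\m/\bar\m^2 \cong \m/(\m^2 + (f))$ whose kernel $(\m^2+(f))/\m^2$ is generated over $k$ by the single class of $f$, hence is at most one-dimensional. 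This yields the upper bound $\dim_k \m/\m^2 \leq \dim_k \bar\m/\bar\m^2 + 1 = d$. Combined with the general lower bound $\dim_k \m/\m^2 \geq d$, we conclude $\dim_k \m/\m^2 = d$, so $R$ is regular.

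The only genuine input, and hence the step to handle with care, is the dimension equality $\dim R/(f) = d-1$: this is precisely where the non-zerodivisor hypothesis is used, and it is essential, since cutting by a zerodivisor need not drop the dimension and the conclusion would then fail. Everything else is formal bookkeeping — the lower bound on the embedding dimension is standard, and the upper bound is the elementary observation that killing the extra generator $f$ changes the cotangent space by at most one dimension. It is worth noting that the argument shows, a posteriori, that $f \notin \m^2$, so that $f$ is part of a regular system of parameters for $R$.
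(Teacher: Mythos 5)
Your proof is correct and is essentially the same argument as the paper's: the paper lifts a regular system of parameters of $R/(f)$ and appends $f$ to obtain $\dim R$ generators of $\m$, which is exactly your upper bound $\dim_k \m/\m^2 \le \dim R$ expressed via Nakayama, combined with the standard lower bound. Both proofs hinge on the equality $\dim R/(f) = \dim R - 1$, which you correctly identify as the one place the non-zerodivisor hypothesis is used.
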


\begin{proof}
Let $n=\dim R$. Then since $f$ is a regular element $\dim R/f = n-1$, hence there are $n-1$ elements $a_i\in \m$ such that
	\[
		(a_1,\ldots,a_{n-1}) = \m/f\textrm{ in }R/f
	\]
	hence 
	\[
		(a_1,\ldots,a_{n-1}, f ) = \m
	\]
	This proves that $\m$ can be generated by $n$ elements, hence $R$ is regular.
\end{proof}

	Now we are ready to state and prove the main technical tool of this section: basically, it says that a complete intersection can be reached by a sequence of complete intersections with mild singularities.

\begin{lm} \label{genericgens}
	Using the notation of Setup \ref{setup}, and assume furthermore that all the degrees of the $F_{i}$'s are the same. Then there is a set of generators $G_{1},\ldots, G_{r}$ for $I_{W}$ such that for every subset $S\subseteq \{1,\ldots,r\}$ the complete intersection \[D_S=\bigcap_{i\in S}V(G_{i})\] is reduced, normal and smooth away from the singular locus of $W$.

\end{lm}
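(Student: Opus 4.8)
The plan is to produce the $G_i$ as generic $k$-linear combinations of the $F_j$, and then to split the verification into two regimes: the behavior of $D_S$ \emph{away} from $W$, where Bertini's theorem applies, and its behavior \emph{along} $W$, where Bertini is silent but where Lemma \ref{regularcuts} rescues us. First I would fix notation and reduce the three conclusions to one. Since all the $F_j$ share the same degree $d$, I parametrize candidate generators by invertible matrices $\Lambda=(\lambda_{ij})$, setting $G_i=\sum_j\lambda_{ij}F_j$; any such tuple generates $I_W$, and by Lemma \ref{genreg} it is a regular sequence, so every $D_S$ is a genuine complete intersection of codimension $|S|$. Because $X$ is Cohen--Macaulay, each $D_S$ is Cohen--Macaulay and in particular $S_2$; by Serre's criterion it then suffices to prove that each $D_S$ is regular in codimension one ($R_1$), since this yields both reducedness ($R_0+S_1$) and normality ($R_1+S_2$) at once. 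Concretely, I must show that the non-smooth locus of $D_S$ has codimension at least two in $D_S$ and, on the smooth locus of $X$, is contained in $\Sing W$.

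The key step is the behavior along $W$, which I expect to be the main obstacle: $W$ is precisely the base locus of the linear system $\langle F_1,\dots,F_r\rangle$, so Bertini gives no information there and genericity is of no help. Here I would argue as follows. Fix $p\in W\setminus\Sing W$. Then $W\subseteq D_S$, all $G_i$ lie in $\m_p$, and since $G_1,\dots,G_r$ is a regular sequence in the Cohen--Macaulay ring $\mathcal{O}_{X,p}$, so is any reordering; in particular $(G_j)_{j\notin S}$ is a regular sequence on $\mathcal{O}_{D_S,p}$ with quotient $\mathcal{O}_{W,p}$. As $p\notin\Sing W$, the ring $\mathcal{O}_{W,p}$ is regular, so applying Lemma \ref{regularcuts} once for each index $j\notin S$ shows that $\mathcal{O}_{D_S,p}$ is regular, i.e.\ $D_S$ is smooth at $p$. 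This step uses neither genericity nor smoothness of $X$ at $p$, so a \emph{single} choice of generators handles all smooth points of $W$ for all subsets $S$ simultaneously.

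It remains to control $D_S$ away from $W$. There are only finitely many subsets $S$, and for each one Bertini's theorem (in characteristic zero, exactly as in the proof of Lemma \ref{ciinideal}) provides a dense open set of matrices $\Lambda$ for which the complete intersection $D_S$ is smooth on the smooth locus of $X$ away from the base locus of $\langle F_1,\dots,F_r\rangle$, which is $W$. Intersecting these finitely many dense open conditions, together with the invertibility of $\Lambda$, yields one $\Lambda$ that works for every $S$. For such a choice $\Sing(D_S)$ is contained in $\Sing W\cup(\Sing X\setminus W)$. The first piece has codimension at least two in $D_S$: since $W$ is normal (its lc singularities force $\codim_W\Sing W\geq 2$) and $\codim_{D_S}W=r-|S|\geq 0$, one gets $\codim_{D_S}\Sing W\geq r-|S|+2\geq 2$. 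The second piece is $\Sing X$ cut by $|S|$ general members of a system that is base-point-free off $W$, so its dimension drops by $|S|$; as $\codim_X\Sing X\geq 2$ by normality of $X$, it too has codimension at least two in $D_S$. Hence every $D_S$ is $R_1$, and the reduction in the first paragraph gives reducedness and normality, with smoothness away from $\Sing W$ over the smooth locus of $X$.

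The delicate point, and the step I expect to be hardest to get right, is thus the analysis along the base locus $W$: Bertini controls $D_S$ only off $W$, so smoothness of $D_S$ at the smooth points of $W$ cannot be obtained by a genericity argument. The resolution is to read the inclusion $W\subseteq D_S$ in reverse, deducing regularity of the \emph{larger} complete intersection $D_S$ from regularity of the \emph{smaller} one $W$ via Lemma \ref{regularcuts} and the permutability of regular sequences in a Cohen--Macaulay local ring. This is exactly what decouples the verification along $W$ from the genericity needed away from $W$, and it is what allows one fixed set of generators to serve all $2^r$ subsets at once.
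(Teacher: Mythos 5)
Your proof is correct and follows essentially the same route as the paper: generic linear combinations $G_i=\sum_j\lambda_{ij}F_j$ parametrized by invertible matrices, Lemma \ref{genreg} to get a regular sequence, Bertini to control $D_S$ away from the base locus $W$, and Lemma \ref{regularcuts} to propagate regularity from the smooth points of $W$ up to the larger complete intersection $D_S$. The only cosmetic differences are that the paper applies Bertini iteratively (restricting the linear system to $D_1$, then to $D_1\cap D_2$, and so on) and derives reducedness from Lemma \ref{veryreduced} rather than from Serre's criterion; your explicit accounting for the locus $\Sing X\setminus W$ is, if anything, more careful than the paper's.
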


\begin{proof}
	Let $G_i = \sum \mu_{ij} F_j$ where $\mu_{ij}\in k$. Let $U\subset \A^{r^2}$ be the open subset where the matrix $\Xi = (\mu_{ij})$ is invertible. It's easy to check that for any choice of $(\mu_{ij})\in U$ the $G_i$'s generate the ideal $I_W$, so they form a regular sequence by Lemma \ref{genreg}.
	
	It follows that the $D_S$'s are complete intersections, and their reducedness follows directly from Lemma \ref{veryreduced}.
	
	Let $D_{i} = V(G_{i})$. We claim that there is an open set $V\subset \A^{r^2}$ such that all the intersections
	\[
		D_1, D_1\cap D_2,\ldots, D_1\cap\ldots\cap D_{r-1}
	\]
	are smooth away from $\Sing W$. Note that by applying the same reasoning to any permutation of the set $\{1,\ldots,r\}$ and intersecting all the $V$'s thusly obtained we can prove the claim for an arbitrary $D_S$.
	
	Consider the linear system $\mathfrak{d}$ on $X$ generated by the $F_i$'s: by Bertini's theorem (cfr. \cite[Rem. III.10.9.2]{hart77}), for a generic choice of the first row of $\Xi$ the scheme $V(G_1)$ is smooth away from $W$ and $G_1,F_2,\ldots$ generate $I_W$. Let $w\in W$ be a smooth point: then by Lemma \ref{regularcuts}, it's also a smooth point of $V(G_1,\ldots,F_{r-1})$. By applying the same lemma repeatedly, we get that $D_1$ is smooth at $w$.
	
	Having chosen $G_1$, by restricting $\mathfrak{d}$ to $D_1$ we can apply Bertini's theorem again to get that for a generic choice of the second row of $\Xi$ the zero set of $G_2$ is smooth away from $\Sing D_1$. Continuing in this fashion we get the desired open set $V$.
	
	Moreover each $D_S$ is a complete intersection that is smooth away from $\Sing W$, whose codimension in $D_S$ is at least $1$ since $W$ is normal. Therefore every $D_S$ is normal.
\end{proof}
Note that the fact that every $D_S$ is normal implies that it is actually irreducible, as complete intersections are connected.
We can even say more about the singularities of $D_S$.

\begin{lm} \label{DB gens}
	Let $W$ be an irreducible reduced complete intersection in $X$ with lc singularities and let $G_1, \ldots, G_r$ be generators for $I_W$ such that each partial complete intersection $D_S$ is smooth away from $W$. 
	Then the pair
	\[
	(D_S, D_{S'}),\quad\textrm{ where }S' = S \cup\{j\}\textrm{ for some }j\notin S  
	\]
	is a Du Bois pair.
\end{lm}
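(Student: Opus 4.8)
The plan is to reduce the statement to the assertion that \emph{every} partial complete intersection $D_T$, for $T\subseteq\{1,\ldots,r\}$, has Du Bois singularities, and then to invoke the ``two out of three'' principle. Indeed, once we know that both $D_S$ and $D_{S'}$ are Du Bois, applying \cite[Proposition 6.15]{kollar2013sing} to the reduced pair $(D_S,D_{S'})$ immediately gives that $(D_S,D_{S'})$ is a Du Bois pair, since two of the three objects $\{D_S, D_{S'}, (D_S,D_{S'})\}$ are then Du Bois. So the whole proof rests on the claim that each $D_T$ is Du Bois.

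I would prove this claim by descending induction on $|T|$. For the base case $T=\{1,\ldots,r\}$ we have $D_T = W$, which is normal by Lemma \ref{genericgens} and log canonical by hypothesis; hence $W$ is Du Bois by \cite[Theorem 1.4]{kollar2010log} applied to the pair $(W,0)$. For the inductive step, assume $D_T$ is Du Bois whenever $|T|=m+1$, and let $|S|=m$. Choose any $j\notin S$ and set $S'=S\cup\{j\}$, so that $D_{S'}$ is Du Bois by the inductive hypothesis. Since $G_1,\ldots,G_r$ is a regular sequence by Lemma \ref{genreg}, the element $G_j$ is a non zerodivisor on $D_S$, so $D_{S'}=D_S\cap V(G_j)$ is an effective Cartier divisor on $D_S$, and it is reduced by Lemma \ref{genericgens}. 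The deformation theorem \cite[Theorem 4.1]{ks2011bois} then shows that $D_S$ has Du Bois singularities in a neighborhood of $D_{S'}$.

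It remains to control $D_S$ away from $D_{S'}$, and this is where the precise form of Lemma \ref{genericgens} is essential. By construction $D_S$ is smooth away from $\Sing W$, and since $W=\bigcap_{i=1}^{r}V(G_i)\subseteq D_{S'}$ we have $\Sing W\subseteq W\subseteq D_{S'}$. Hence every point of $D_S$ not lying on $D_{S'}$ is a smooth point, and therefore Du Bois. Combining this with the Du Bois neighborhood of $D_{S'}$ produced above yields an open cover of $D_S$ by Du Bois loci; as the Du Bois property is local, $D_S$ is Du Bois, which closes the induction and finishes the argument.

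I expect the main obstacle to be exactly the verification that the potentially non–Du Bois locus of $D_S$ is contained in the Cartier divisor $D_{S'}$. The deformation theorem only propagates the Du Bois property to a neighborhood of $D_{S'}$, so the strategy collapses unless every bad point of $D_S$ already lies on $D_{S'}$. The containment $\Sing W\subseteq D_{S'}$, valid because $W$ sits inside every $D_T$, is precisely what saves the argument; it is also the reason the smoothness conclusion of Lemma \ref{genericgens} is stated away from $\Sing W$ rather than away from the larger subvariety $W$.
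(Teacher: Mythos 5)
Your proposal is correct and follows essentially the same route as the paper: descending induction on the size of the index set, with the base case $D_T=W$ handled by \cite[Theorem 1.4]{kollar2010log}, the inductive step by the deformation theorem \cite[Theorem 4.1]{ks2011bois} combined with smoothness away from $W$, and the pair statement extracted at the end via \cite[Prop. 6.15]{kollar2013sing}. If anything, your write-up is slightly more careful than the paper's, since you explicitly verify that $D_{S'}$ is a reduced Cartier divisor on $D_S$ and that the non--Du Bois locus is trapped inside $D_{S'}$ before gluing.
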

Note that the $H_s$'s are reduced by Lemma \ref{veryreduced}, so the pair $(H_s, H_{s+1})$ is actually a reduced pair.
\begin{proof}
	The idea is to use descending induction on $|S|$.
	Assume that $D_{S'} =D_S \cap D_j$ has Du Bois singularities: then $D_S$ has Du Bois singularities near $W$ by \cite[Theorem 4.1]{ks2011bois}. Moreover, $H_s$ is smooth away from $W$, therefore actually $D_S$ has Du Bois singularities.
	By \cite[Prop. 6.15]{kollar2013sing} then the pair $(D_S, D_{S'})$ is Du Bois: in particular $D_S$ has Du Bois singularities.
	
	Now observe that since $D_S = W$ has lc singularities by hypothesis when $|S| =r$, then by \cite[Theorem 1.4]{kollar2010log} $D_S$ is Du Bois when $|S|=r$.
	
\end{proof}
Note that this proof implies that all the $D_S$'s are Du Bois, hence if $S'$ is \emph{any} superset of $S$, every pair $(D_S, D_{S'})$ is Du Bois.

An interesting corollary of the proof is that the pairs $(X, D_i)$ are lc by \cite[Thm. 1.4]{graf2014potentially}, as both $K_X$ and $D_i$ are Cartier. It turns out that we can exploit this fact to get a stronger result.

\begin{lm} \label{excision}
	Assume $Y$ is reduced, Du Bois and $B_1,\ldots,B_r$ are reduced, Du Bois Cartier divisors such that for all $q\leq r$ every possible intersection
	\[
	 B_{s_1}\cap\ldots \cap B_{s_q}
	\]
	is reduced and Du Bois. Then the pair $(Y,  B_1\cup\ldots\cup B_r)$ is Du Bois.
\end{lm}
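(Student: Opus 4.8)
The plan is to first remove the ambient $Y$ from the picture and reduce to a statement about the union alone, and then to prove that statement by induction on $r$ using only the Excision lemma (\cite[Prop.~6.17]{kollar2013sing}) and the two-out-of-three principle (\cite[Prop.~6.15]{kollar2013sing}). Write $D = B_1\cup\ldots\cup B_r$. Since $Y$ is Du Bois and $D\subseteq Y$ is reduced, the two-out-of-three principle shows that $(Y,D)$ is a Du Bois pair as soon as $D$ itself is Du Bois. Thus it suffices to establish the following claim, which I would phrase without reference to the divisor or Cartier hypotheses: if $B_1,\ldots,B_r$ are reduced closed subschemes, each Du Bois, such that every intersection $B_{s_1}\cap\ldots\cap B_{s_q}$ is reduced and Du Bois, then $D=B_1\cup\ldots\cup B_r$ is Du Bois.

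I would prove the claim by induction on $r$, the case $r=1$ being the hypothesis on $B_1$. For the inductive step set $A=B_1\cup\ldots\cup B_{r-1}$ and $B=B_r$, so that $D=A\cup B$ and $W:=A\cap B=\bigcup_{i=1}^{r-1}(B_i\cap B_r)$. The subcollection $B_1,\ldots,B_{r-1}$ satisfies the hypotheses of the claim, so by induction $A$ is Du Bois. The crucial point is that $W$ is also Du Bois: viewing $C_i:=B_i\cap B_r$ as $r-1$ reduced Du Bois subschemes of $B_r$, each of their intersections $C_{i_1}\cap\ldots\cap C_{i_q}=B_r\cap B_{i_1}\cap\ldots\cap B_{i_q}$ is again one of the reduced Du Bois intersections provided by the hypothesis, so the inductive hypothesis (now for the $r-1$ subschemes $C_i$) shows that $W=C_1\cup\ldots\cup C_{r-1}$ is Du Bois.

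With $A$, $B$ and $W=A\cap B$ all reduced and Du Bois, the endgame is short. Applying two-out-of-three to the reduced pair $(B,W)$ shows that $(B,W)$ is a Du Bois pair. The Excision lemma, applied to $D=(A\cup B)_{\mathrm{red}}$ with $W=(A\cap B)_{\mathrm{red}}$, then gives that $(D,A)$ is a Du Bois pair if and only if $(B,W)$ is; hence $(D,A)$ is a Du Bois pair. Since $A$ is Du Bois, a final application of two-out-of-three to $(D,A)$ forces $D$ to be Du Bois, which completes the induction and, together with the reduction of the first paragraph, the proof.

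The step I expect to be the main obstacle is showing that $W=A\cap B$ is Du Bois, precisely because $W$ is not a single stratum but a union of the subschemes $C_i=B_i\cap B_r$; this is what forces a second use of the inductive hypothesis, this time inside the smaller ambient $B_r$. The bookkeeping that must be handled carefully is that the full package of hypotheses descends to the collection $\{C_i\}$ on $B_r$: one needs every partial intersection of the $C_i$ to be reduced and Du Bois, which holds because these intersections coincide with the intersections $B_r\cap B_{i_1}\cap\ldots\cap B_{i_q}$ already assumed reduced (cf.\ Lemma \ref{veryreduced} in the intended application). It is worth noting that neither the divisor structure nor the Cartier hypothesis on the $B_i$ is used anywhere in this argument: only the reducedness and the Du Bois property of the strata and their mutual intersections enter, through the Excision lemma and the two-out-of-three principle.
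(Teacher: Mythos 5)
Your proof is correct and follows essentially the same route as the paper: induction on $r$, splitting off $B_r$, and using the excision lemma to trade the pair $(B_1\cup\ldots\cup B_r,\,B_1\cup\ldots\cup B_{r-1})$ for the pair $(B_r,\,\bigcup_i (B_i\cap B_r))$, which is handled by the inductive hypothesis applied to the traces $B_i\cap B_r$ inside $B_r$. Your observation that the Cartier and divisor hypotheses are never used, so the induction can be run for arbitrary reduced Du Bois closed subschemes, is a mild (and welcome) sharpening of the paper's formulation but not a different argument.
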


\begin{proof}
	The idea is to use induction and the excision lemma for Du Bois singularities (see \cite[Prop. 6.17]{kollar2013sing}).
	If $r=1$, the statement is clearly true, since $Y$ Du Bois and $B_1$ Du Bois implies that the pair $(Y, B_1)$ is Du Bois as well.
	Assume the claim is true for $r-1$. It's enough to show that $B_1\cup\ldots\cup B_r$ is Du Bois, since we already know that $Y$ is Du Bois. By the induction hypothesis we know that $B_1\cup\ldots\cup B_{r-1}$ is Du Bois. Then the claim will follow if we can prove that
	\[
	(B_1\cup\ldots\cup B_r, B_1\cup\ldots\cup B_{r-1})
	\]
	is a Du Bois pair. By the excision lemma, this is equivalent to showing that
	\[
	(B_r,( B_1\cap B_r) \cup\ldots \cup (B_{r-1}\cap B_r))
	\]
	is Du Bois, and this last claim follow from the inductive hypothesis, as $B_r$ is reduced and Du Bois and $B_j\cap B_r$ is a reduced Du Bois Cartier divisor on $B_r$ for each $j$.
\end{proof}

\begin{proof}[Proof of Theorem \ref{lcpair}]
	By Lemmas \ref{genericgens} and \ref{DB gens}, there is a sequence of generators $G_i$ for $I_W$ such that all the possible partial intersections have Du Bois singularities. Then Lemma \ref{excision} yields that the pair
	\[
	(X, \bigcup D_i)
	\]
	is Du Bois. If $\Delta  = D_1+\ldots + D_r$ then, since $K_X$ is Cartier by \cite[Thm. 1.4]{graf2014potentially} we have that $(X,\Delta)$ is a log canonical pair. The other part of the statement follows from Proposition \ref{ci}.
\end{proof}

\begin{rmk}
	Actually the proof of this last theorem shows that if a subvariety $Z$ of $X$ is a component of a special Du Bois complete intersection, then $Z$ is a lc center for a \emph{log canonical} pair $(X,\Delta)$.
\end{rmk}

	\providecommand{\bysame}{\leavevmode\hbox to3em{\hrulefill}\thinspace}
\providecommand{\MR}{\relax\ifhmode\unskip\space\fi MR }
\providecommand{\MRhref}[2]{%
  \href{http://www.ams.org/mathscinet-getitem?mr=#1}{#2}
}
\providecommand{\href}[2]{#2}

\end{document}